\def\qed{\hfill\ifhmode\unskip\nobreak\fi\quad\ifmmode\Box\else\hfill$\Box$\fi}
\def\ite#1{\hfill\break${}$\hbox to 50pt {\quad(#1)\hfill}}
\newtheorem{thm}{Theorem}[section]
\newtheorem{cor}[thm]{Corollary}
\newtheorem{lem}[thm]{Lemma}
\newtheorem{claim}[thm]{Claim}
\begin{document}

\title{\vspace{-0.5in}Stability in the Erd\H{o}s--Gallai Theorem on  cycles and paths, II\footnote{This paper started
at SQuaRES meeting of the American Institute of Mathematics.}}

\author{
{{Zolt\'an F\" uredi}}\thanks{
\footnotesize {Alfr\' ed R\' enyi Institute of Mathematics, Hungary.
E-mail:  \texttt{zfuredi@gmail.com.}
{Research was supported in part by grant K116769
from the National Research, Development and Innovation Office NKFIH,
by the Simons Foundation Collaboration Grant \#317487,
and by the European Research Council Advanced Investigators Grant 267195.}
}}
\and
{{Alexandr Kostochka}}\thanks{
\footnotesize {University of Illinois at Urbana--Champaign, Urbana, IL 61801
 and Sobolev Institute of Mathematics, Novosibirsk 630090, Russia. E-mail: \texttt {kostochk@math.uiuc.edu}.
 Research of this author
is supported in part by NSF grant  DMS-1266016
and  by grants 15-01-05867 and 16-01-00499  of the Russian Foundation for Basic Research.
}}
\and
{{Ruth Luo}}\thanks{
\footnotesize {University of Illinois at Urbana--Champaign, Urbana, IL 61801. E-mail: \texttt {ruthluo2@illinois.edu}.
}}
\and{{Jacques Verstra\"ete}}\thanks{Department of Mathematics, University of California at San Diego, 9500
Gilman Drive, La Jolla, California 92093-0112, USA. E-mail: {\tt jverstra@math.ucsd.edu.} Research supported by NSF Grant DMS-1101489. }}

\maketitle

\vspace{-0.3in}


\begin{abstract}

The Erd\H{o}s--Gallai Theorem states that for $k \geq 3$, any $n$-vertex graph with no cycle of length at least $k$ has at most 
$\frac{1}{2}(k-1)(n-1)$ edges. 
A stronger version of the Erd\H{o}s--Gallai Theorem was given by Kopylov: If $G$ is a 2-connected $n$-vertex graph with no cycle of length at least $k$, then
$e(G) \leq \max\{h(n,k,2),h(n,k,\lfloor \frac{k-1}{2}\rfloor)\}$, where
$h(n,k,a) := {k - a \choose 2} + a(n - k + a)$. Furthermore, Kopylov presented the two possible extremal graphs, one with $h(n,k,2)$ edges and one with $h(n,k,\lfloor \frac{k-1}{2}\rfloor)$ edges.

In this paper, we complete a stability theorem which strengthens Kopylov's result. In particular, we show that for $k \geq 3$ odd and all $n \geq k$, every $n$-vertex $2$-connected graph $G$ with no cycle of length at least $k$ is a subgraph of one of the two extremal graphs or $e(G) \leq \max\{h(n,k,3),h(n,k,\frac{k-3}{2})\}$. The upper bound for $e(G)$ here is tight.

\medskip\noindent
{\bf{Mathematics Subject Classification:}} 05C35, 05C38.\\
{\bf{Keywords:}} Tur\' an problem, cycles, paths.
\end{abstract}

\section{Introduction}

One of the  basic Tur\'{a}n-type problems 
is to determine the maximum number of edges in an $n$-vertex graph with no $k$-vertex path. 
Erd\H{o}s and Gallai~\cite{ErdGal59} in 1959 proved the following fundamental result on this problem.

\begin{thm}[Erd\H{o}s and Gallai~\cite{ErdGal59}]\label{ErdGallaiPath}
Fix $n,k \geq 2$. If $G$ is an $n$-vertex graph that does not contain a path with $k$ vertices, then $e(G) \leq \frac{1}{2}(k-2)n$.  
\end{thm}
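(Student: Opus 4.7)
The plan is to proceed by induction on $n$, following the classical approach. For the base case $n \leq k-1$ we trivially have $e(G) \leq \binom{n}{2} = \frac{n(n-1)}{2} \leq \frac{(k-2)n}{2}$. For the inductive step we may assume $n \geq k$. If $G$ is disconnected with components $G_1, \dots, G_t$ on $n_1, \dots, n_t$ vertices, each $G_i$ is $P_k$-free on $n_i < n$ vertices, so summing the inductive bounds gives $e(G) = \sum_i e(G_i) \leq \sum_i \tfrac{(k-2)n_i}{2} = \tfrac{(k-2)n}{2}$. Hence I may assume $G$ is connected.

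Let $P = v_1 v_2 \cdots v_\ell$ be a longest path in $G$. Since $G$ has no path on $k$ vertices we have $\ell \leq k-1$, and since $n \geq k > \ell$ there is some vertex outside $V(P)$; the maximality of $P$ forces $N(v_1) \cup N(v_\ell) \subseteq V(P)$. The heart of the proof is the following P\'osa-style rotation step. Define
\[
A = \{i \in \{2, \dots, \ell\} : v_1 v_i \in E(G)\}, \qquad B = \{i \in \{2, \dots, \ell\} : v_{i-1} v_\ell \in E(G)\},
\]
so $|A| = d(v_1)$ and $|B| = d(v_\ell)$. I claim $A \cap B = \emptyset$. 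Indeed, if some $i$ lies in $A \cap B$, then $v_1, v_i, v_{i+1}, \dots, v_\ell, v_{i-1}, v_{i-2}, \dots, v_2, v_1$ is a cycle $C$ of length $\ell$ spanning $V(P)$; since $G$ is connected and $|V(P)| < n$, some vertex $u \notin V(P)$ is adjacent to a vertex $w$ of $C$, and traversing $C$ starting at a cyclic neighbor of $w$ the long way around to $w$ and then appending $wu$ yields a path on $\ell + 1$ vertices, contradicting the maximality of $P$.

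Consequently $A \cap B = \emptyset$ and $A \cup B \subseteq \{2, \dots, \ell\}$, so $d(v_1) + d(v_\ell) \leq \ell - 1 \leq k-2$. Hence one of the endpoints, say $v_1$, satisfies $d(v_1) \leq \lfloor (k-2)/2 \rfloor$. Applying the inductive hypothesis to $G - v_1$ (which is again $P_k$-free on $n-1$ vertices) gives
\[
e(G) = e(G - v_1) + d(v_1) \leq \tfrac{(k-2)(n-1)}{2} + \tfrac{k-2}{2} = \tfrac{(k-2)n}{2},
\]
as required. The main obstacle I expect is the index-careful justification of the rotation step and the subsequent extraction of a longer path from the cycle $C$ together with an outside neighbor; everything else is bookkeeping.
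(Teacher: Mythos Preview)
Your proof is correct. The induction with the P\'osa-style rotation is carefully executed: the index sets $A$ and $B$ are set up so that $|A|=d(v_1)$, $|B|=d(v_\ell)$, the cycle you build from an index in $A\cap B$ really spans $V(P)$, and connectivity together with $\ell\le k-1<n$ then yields a longer path. The final arithmetic is clean.

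The paper, however, does not argue this way. Instead of an intrinsic induction, it derives Theorem~\ref{ErdGallaiPath} from the cycle version, Theorem~\ref{ErdGallaiCyc}: given an $n$-vertex $P_k$-free graph $H$, adjoin a new vertex adjacent to all of $V(H)$ to obtain an $(n+1)$-vertex graph $H'$ with no cycle of length at least $k+1$, and then apply the bound $e(H')\le \tfrac{1}{2}k\cdot n$ to recover $e(H)\le \tfrac{1}{2}(k-2)n$. So the paper's argument is a one-line reduction that pushes all the work into the (cited, not proved) cycle theorem, whereas your argument is self-contained and elementary, needing nothing beyond the rotation lemma. The trade-off is that the paper's reduction is shorter and highlights the equivalence between the path and cycle versions, while your approach actually proves the statement from scratch.
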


When $n$ is divisible by $k - 1$, the bound is best possible. Indeed, 
 the   $n$-vertex graph whose every component is the complete graph $K_{k-1}$ has $\frac{1}{2}(k-2)n$ edges and
  no $k$-vertex paths.
Also, if $H$ is an $n$-vertex graph without a $k$-vertex path $P_k$, then by adding to $H$ a new vertex $v$ adjacent to all vertices of $H$
  we obtain  an $(n+1)$-vertex graph $H'$
with $e(H)+n$ edges that contains no cycle  of length $k+1$ or longer. Then Theorem~\ref{ErdGallaiPath}  follows from another
theorem of Erd\H{o}s and Gallai:

\begin{thm}[Erd\H{o}s and Gallai~\cite{ErdGal59}]\label{ErdGallaiCyc}
Fix $n,k \geq 3$. If $G$ is an $n$-vertex graph that does not contain a cycle of length at least $k$, then $e(G) \leq \frac{1}{2}(k-1)(n-1)$.  
\end{thm}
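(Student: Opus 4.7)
The natural approach is induction on $n$, with a reduction to the $2$-connected case via block decomposition. For the base case $n\le k-1$, the graph contains no cycle of length at least $k$ vacuously, and $e(G)\le \binom{n}{2}\le \tfrac{1}{2}(k-1)(n-1)$ since $n\le k-1$. So assume $n\ge k$ and that the bound holds for all graphs on fewer than $n$ vertices.

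If $G$ is disconnected or contains a cut-vertex, write $G$ as the edge-disjoint union of two subgraphs $G_1, G_2$ with $|V(G_1)\cap V(G_2)|\le 1$ and $n_i:=|V(G_i)|<n$. Each $G_i$ has no cycle of length at least $k$, so the inductive hypothesis gives $e(G_i)\le \tfrac{1}{2}(k-1)(n_i-1)$; since $(n_1-1)+(n_2-1)\le n-1$, summing yields the required bound. Hence we may assume $G$ is $2$-connected.

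For the $2$-connected case I would invoke the classical Dirac-type lower bound on the circumference: every $2$-connected graph $G$ satisfies $c(G)\ge \min\{2\delta(G),\,n\}$. Since $c(G)\le k-1$ and $n\ge k$, this forces $2\delta(G)\le k-1$, so some vertex $v$ has $d(v)\le \lfloor (k-1)/2\rfloor$. Deleting $v$ leaves an $(n-1)$-vertex graph still free of cycles of length at least $k$, so by induction $e(G-v)\le \tfrac{1}{2}(k-1)(n-2)$. Adding back the at most $\tfrac{k-1}{2}$ edges at $v$ gives $e(G)\le \tfrac{1}{2}(k-1)(n-1)$, as desired.

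The technical heart of the proof is the circumference inequality $c(G)\ge \min\{2\delta(G),n\}$ for $2$-connected $G$. The standard way to establish it is to take a longest cycle $C$ in $G$; if $V(C)\ne V(G)$, use $2$-connectivity to find a vertex $v\notin V(C)$ joined to $V(C)$ by two internally disjoint paths. A careful analysis of the cyclic order on $C$ of the neighbours of the endpoints of a longest external segment (a P\'osa-style rotation--extension argument) either produces a cycle longer than $C$, contradicting the choice of $C$, or yields a cycle of length at least $2\delta(G)$. This is the place where $2$-connectivity is used in an essential way, and it is the main obstacle to a short self-contained proof.
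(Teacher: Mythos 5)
Your argument is correct, but there is nothing in the paper to compare it against: Theorem~\ref{ErdGallaiCyc} is quoted from Erd\H{o}s--Gallai \cite{ErdGal59} without proof (the paper only uses it to \emph{derive} Theorem~\ref{ErdGallaiPath} via the dominating-vertex trick). Your route is the standard modern one: the block reduction is fine (the accounting $(n_1-1)+(n_2-1)\le n-1$ is exactly why the bound is stated with $n-1$ rather than $n$), the base case $n\le k-1$ checks out since $\binom{n}{2}\le\frac{1}{2}(k-1)(n-1)$ there, and in the $2$-connected case the Dirac circumference bound $c(G)\ge\min\{2\delta(G),n\}$ forces a vertex of degree at most $\lfloor(k-1)/2\rfloor$ whose deletion closes the induction. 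The only point where you lean on an external result is that circumference bound, which you correctly identify as the technical heart and only sketch; note that it follows immediately from Kopylov's lemma already quoted in this paper as Theorem~\ref{le:kop} (apply it to a longest path $P$ with ends $x,y$: maximality puts all neighbours of $x$ and $y$ on $P$, so $d(x,P)+d(y,P)\ge 2\delta(G)$), so if you want a self-contained write-up in the spirit of this paper, citing Theorem~\ref{le:kop} is the cleanest way to discharge that step.
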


The bound of this theorem
 is best possible for $n - 1$ divisible by $k - 2$. Indeed, 
any connected $n$-vertex graph in which every block is a $K_{k-1}$
 has 
 $\frac{1}{2}(k-1)(n-1)$ edges and no cycles of length at least $k$.
In the 1970's, some refinements and new proofs of   Theorems~\ref{ErdGallaiPath} and~\ref{ErdGallaiCyc}
were obtained by   Faudree and Schelp~\cite{FaudScheB,FaudSche75}, Lewin~\cite{Lewin}, Woodall~\cite{Woodall}, and Kopylov~\cite{Kopy} -- see~\cite{FS224} for more details.
The strongest version was proved by
   Kopylov~\cite{Kopy}. His result is stated in terms of
 the following graphs.  Let  $n \geq k$ and $1\leq   a < \frac{1}{2}k$. The $n$-vertex graph $H_{n,k,a}$ is as follows.
 The vertex set of $H_{n,k,a}$ is the union of three disjoint sets $A,B,$ and $C$ such that $|A| = a$, $|B| = n - k + a$ and $|C| = k - 2a$,
 and the edge set of $H_{n,k,a}$ consists of all edges between $A$ and $B$ together with all edges in $A \cup C$ (Fig.~1 shows 
$H_{14,11,3}$). 
Let
\[ h(n,k,a) := e(H_{n,k,a}) = {k - a \choose 2} + a(n - k + a).\]
\begin{figure}\label{figu1}
  \centering
    \includegraphics[width=0.25\textwidth]{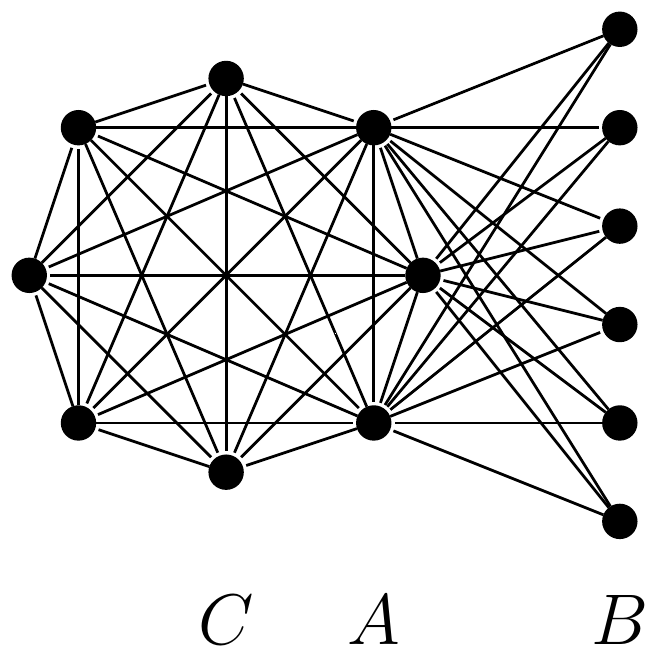}
  \caption{$H_{14,11,3}$.}
\end{figure}

For a graph $G$, let $c(G)$ denote the length of a longest cycle in $G$. Observe that
 $c(H_{n,k,a}) < k$: Since $|A\cup C|=k-a$, any cycle $D$
 of at length at least $k$ has
at least $a$ vertices in $B$. But as $B$ is independent and $2a<k$, $D$ also has to contain 
at least $k+1$ neighbors of the vertices in $B$, while only $a$ vertices in  $A$ have neighbors in $A$.
Kopylov~\cite{Kopy} showed that the extremal $2$-connected  $n$-vertex graphs with no cycles
of length at least $k$ are
$G = H_{n,k,2}$ and $G = H_{n,k,t}$: the first has more edges for small $n$, and the second --- for large $n$.

\begin{thm} [Kopylov  \cite{Kopy}] \label{th:Kopylov2}
Let $n \geq k \geq 5$ and $t = \lfloor \frac{1}{2}(k-1)\rfloor$. If $G$ is an $n$-vertex $2$-connected graph with $c(G)<k$, then
\begin{equation}\label{eq:kop}
   e(G)\leq \max\{h(n,k,2), h(n,k,t)\}
\end{equation}
with equality only if $G = H_{n,k,2}$ or $G = H_{n,k,t}$.
 \end{thm}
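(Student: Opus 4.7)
I would argue by strong induction on $n$. For the base case $n = k$, I would directly analyse $2$-connected non-Hamiltonian $k$-vertex graphs of maximum edge count, using classical Hamiltonicity criteria (Ore/Chvátal--Erd\H{o}s) and verifying that only $H_{k,k,2}$ and $H_{k,k,t}$ achieve the bound. For the inductive step $n > k$, I begin by showing $\delta(G) \le t$: otherwise the Dirac-type lower bound $c(G) \ge \min\{n, 2\delta(G)\}$ for $2$-connected graphs, combined with $n \ge k$ and $2(t+1) \ge k$, would give $c(G) \ge k$, contradicting the hypothesis.

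The central move is to find a degree-$2$ vertex $v$ whose removal preserves $2$-connectedness. When it exists, the identity $h(n,k,a) - h(n-1,k,a) = a$ lets the induction pass cleanly:
\[
e(G) = e(G-v) + 2 \le \max\{h(n-1,k,2), h(n-1,k,t)\} + 2 \le \max\{h(n,k,2), h(n,k,t)\},
\]
using $t \ge 2$ for the second branch. Equality forces $G - v \in \{H_{n-1,k,2}, H_{n-1,k,t}\}$; locating the two neighbors of $v$ (both forced into the ``hub'' $A$, to avoid creating a cycle of length at least $k$ through $v$) then pins $G$ down to $H_{n,k,2}$ (or to $H_{n,k,t}$ in the borderline case $t = 2$, i.e.\ $k \in \{5,6\}$).

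The two scenarios where this scheme breaks are the crux. If $3 \le \delta(G) \le t$, no degree-$2$ vertex exists, and removing any vertex loses too many edges for the $h(n,k,2)$ branch of the induction to absorb. Here I would switch to a structural argument: take a longest cycle $C$ with $|C| \le k-1$, and use P\'osa-type rotations to show that every vertex $v \notin V(C)$ has $d(v) \le t$ with ``spread-out'' neighbors on $C$, while any edge inside $V(G) \setminus V(C)$ would produce a longer cycle; a direct edge count then yields $e(G) \le h(n,k,t)$, with equality only at $H_{n,k,t}$. If instead $\delta(G) = 2$ but every degree-$2$ vertex $v$ has $G - v$ disconnected, then $\{v,w\}$ (for any cut vertex $w$ of $G - v$) is a $2$-cut of $G$; I would split $G$ at $\{v,w\}$ into two smaller $2$-connected subgraphs, apply the induction to each, and recombine the edge bounds. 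I expect the structural case $3 \le \delta(G) \le t$, with its associated uniqueness analysis, to be the main obstacle: neither the base case nor the inductive deletion applies directly, and isolating $H_{n,k,t}$ from the longest-cycle analysis demands that the P\'osa rotation machinery be pushed through to its sharp form with all equality conditions under control.
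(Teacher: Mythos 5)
The paper does not actually prove this theorem: it is quoted from Kopylov, and the machinery the paper deploys in Section~5 (the $k$-closure $G^*$, the $t$-core, Theorem~\ref{le:kop}, and $\alpha$-disintegration) is precisely the engine of Kopylov's argument. Measured against that, your plan has a reasonable outer shell (deleting a well-placed degree-$2$ vertex loses exactly $2\le a$ edges per step, and $\delta(G)\le t$ does follow from Dirac's bound $c(G)\ge\min\{n,2\delta\}$ for $2$-connected graphs), but the two cases you defer are not technicalities --- they are the theorem --- and as described they contain genuine gaps.

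First, in the case $3\le\delta(G)\le t$, your key claims about a longest cycle $C$ are not valid deductions. It is false that an arbitrary longest cycle in a $2$-connected graph has an independent complement: attach a path $a\,u_1\,u_2\,b$ to two antipodal vertices $a,b$ of a $C_6$; the $C_6$ is a longest cycle, yet $u_1u_2$ is an edge off it. Likewise, the standard ``no two consecutive neighbours'' argument bounds only the number of neighbours \emph{on} $C$, so $d(v)\le t$ for $v\notin V(C)$ does not follow. Kopylov's route avoids exactly this: pass to the $k$-closure $G^*$, use Theorem~\ref{le:kop} to show the $t$-core $H$ of $G^*$ is a complete graph on $\ell$ vertices with $t+2\le\ell\le k-2$, apply $(k-\ell)$-disintegration to get $e(G)\le h(n,k,k-\ell)$, and finish by convexity of $h(n,k,a)$ in $a$; the step ``the complement of the core is independent with all neighbourhoods equal'' works only because $H=K_{k-2}$ is Hamiltonian-connected, not because some cycle is longest. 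Without this (or a fully worked rotation substitute carrying all equality cases), the bound $e(G)\le h(n,k,t)$ in this case is unproved. Second, in the case where every degree-$2$ vertex leaves a non-$2$-connected graph, splitting at the $2$-cut $\{v,w\}$ produces pieces that may have fewer than $k$ vertices, where the induction hypothesis does not apply and $h(\cdot,k,a)$ is not even defined; recombining the edge bounds across a $2$-cut so that the total is $\max\{h(n,k,2),h(n,k,t)\}$ with the correct extremal graphs is a substantial argument you have not supplied. (The base case $n=k$ likewise needs more than citing a Hamiltonicity criterion: one needs the count $e\le r^2+\binom{k-r}{2}$ over the Chv\'atal index $r$ together with a structural or disintegration step, as carried out in Section~5.2 of the paper.)
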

 
\section{Main results}

\subsection{A previous result}
 Recently, three of the present authors proved in~\cite{main}  a stability version of Theorems~\ref{ErdGallaiCyc} and~\ref{th:Kopylov2} for $n$-vertex $2$-connected graphs with  $n\geq 3k/2$, but
the problem remained open for $n<3k/2$ when $k\geq 9$.
 The main result of~\cite{main} was the following:

\begin{thm}[F\"uredi, Kostochka, Verstra\"ete~\cite{main}]\label{oldmainthm}Let $t \geq 2$ and $n \geq 3t$ and $k \in \{2t + 1,2t + 2\}$. Let $G$ be a
 $2$-connected $n$-vertex graph $c(G) < k$. Then
$e(G) \leq h(n,k,t-1)$ unless
\vspace{-4mm}
\begin{center}
\begin{tabular}{lp{5.8in}}
$(a)$ & $k = 2t + 1$, $k \neq 7$,  and $G \subseteq  H_{n,k,t}$ or \\
$(b)$ & $k = 2t + 2$ or $k= 7$,  and $G - A$ is a star forest for some $A \subseteq V(G)$ of size at most $t$. \\
\end{tabular}
\end{center}
\end{thm}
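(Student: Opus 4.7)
My plan is to combine the Kopylov disintegration technique with a careful stability analysis of the edges outside a longest cycle. Let $C$ be a longest cycle of $G$ with $c := |V(C)|$; Theorem~\ref{th:Kopylov2} gives $c \leq k-1$, and in the near-extremal regime we should have $c = k-1$. Set $R := V(G) \setminus V(C)$. The first step is a P\'osa-type rotation: if $v \in R$ has two cyclically consecutive neighbors on $C$, splicing $v$ into $C$ produces a longer cycle, contradicting the maximality of $C$. Quantitatively, consecutive $C$-neighbors of any $v \in R$ must be at cyclic distance at least $k - c$ on $C$, which gives $\deg_C(v) \leq t$ when $c = k-1$.

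Step two is to apply the Kopylov-style decomposition $e(G) = e(G[V(C)]) + e(V(C), R) + e(G[R])$. The first term is bounded by a chord-count on $C$ subject to $c(G) < k$ (no two chords can combine with arcs of $C$ to give a cycle of length $\geq k$); the second by $t|R|$ from step one; the third by the Erd\H os--Gallai path Theorem~\ref{ErdGallaiPath}, after observing that $G[R]$ contains no path on $k-c$ vertices (such a path, combined with two $C$-attachments guaranteed by $2$-connectivity, would yield a cycle of length $\geq k$). Summing and optimizing over $c$ recovers the Kopylov bound $h(n,k,t)$.

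For stability, assume $e(G) > h(n,k,t-1)$. The slack in each of the three terms above must be small, and in particular the P\'osa rotation inequality must be tight for most $v \in R$, forcing $\deg_C(v) = t$ and pinning all such $v$ to a common apex set $A \subseteq V(C)$ of size $t$. Once $A$ is identified, $G - A$ cannot contain a triangle or a $P_4$: either would admit a rerouting through $A$ (which is joined to almost all of $V(G) \setminus A$) producing a cycle of length $\geq k$ by an alternating construction. Being $\{K_3, P_4\}$-free is exactly being a star forest, which gives conclusion (b).

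The cleaner conclusion (a) comes from a parity refinement: when $k = 2t+1$ is odd and $k \neq 7$, any edge in $G - A$, combined with the lone extra vertex in $V(C) \setminus A$ and an alternating path through $A$, produces a cycle of length exactly $k$, so $G - A$ must be an independent set and $G \subseteq H_{n,k,t}$. The main obstacle, and the reason for the $k = 7$ exception, lies in the case $c = k-1 = 6$, $t = 3$: here the rotation lemma has just enough slack that small attachment patterns on a $6$-cycle escape the parity argument, and a finite case check is required to confirm that $k = 7$ is the only odd exception. The hypothesis $n \geq 3t$ enters at the ``shared apex'' step, where $|R| \geq n - (k-1) \geq t$ is needed for the apex $A$ to be uniquely determined by the $C$-neighborhoods of vertices of $R$.
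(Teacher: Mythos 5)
Your outline takes a genuinely different route from~\cite{main} (whose method, like the proof of Theorem~\ref{main}$'$ in Section~5 here, is edge contraction preserving $2$-connectedness, Chv\'atal's condition, Kopylov's $\alpha$-disintegration, and the $k$-closure), but the two places where you appeal to "summing the bounds" and "tightness forces the structure" are exactly where the theorem's difficulty lives, and as written they do not go through. The clearest failure is Step two. With $c=k-1=2t$, the term $e(G[V(C)])$ is not controlled by a chord count subject to $c(G)<k$: the graph $K_{k-1}$ has no cycle of length $\geq k$, so the only a priori bound is $\binom{k-1}{2}$. At $n=3t$ (allowed by the hypotheses) the naive sum $\binom{2t}{2}+t|R|$ with $|R|=t$ already exceeds $h(n,k,t)$ by $\binom{t}{2}$ --- indeed in the extremal graph $H_{n,k,t}$ one has $e(G[V(C)])=\binom{t}{2}+t^2=\binom{2t}{2}-\binom{t}{2}$, not $\binom{2t}{2}$. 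So "summing and optimizing over $c$" does not recover Kopylov's bound, let alone the stronger threshold $h(n,k,t-1)$; one must quantify the interaction between chords of $C$ and attachments from $R$ (a dense $G[V(C)]$ forbids high attachment degrees), and that interaction is the whole proof. Relatedly, the claim that $G[R]$ has no path on $k-c$ vertices is vacuous when $c=k-1$ (that is a one-vertex path); the correct exclusion, obtained from two disjoint paths to $C$ together with the longer arc, is roughly $P_t$-freeness, and even that still has to be fed back into the count above.

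The second gap is the stability step. Knowing that every $v\in R$ has at most $t$ neighbours on $C$, pairwise nonconsecutive, and that this is tight for most $v$, does not "pin all such $v$ to a common apex set $A$": two vertices of $R$ can have different, even interleaving, $t$-sets of neighbours on $C$, and excluding this requires a rotation/crossing argument through both vertices that you do not supply. This is precisely where the exceptional even-$k$ and $k=7$ configurations (outcome (b), i.e.\ the star-forest structures of the classes $\mathcal G_2(n,k)$ and $\mathcal G_3(n,k)$ in Section~2.3) arise, so it cannot be absorbed into "the slack must be small." The later assertions that $A$ is joined to almost all of $V(G)\setminus A$ and that any $K_3$ or $P_4$ in $G-A$ reroutes into a cycle of length $\geq k$ both presuppose that identification. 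Your parity remark for odd $k$ and the accounting for $n\geq 3t$ point in a sensible direction, but at present the proposal is an outline of an alternative strategy with its two central lemmas missing, not a proof.
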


Note that \begin{equation*}\label{j13}
  h(n,k,t) - h(n,k,t-1) = \left\{\begin{array}{ll}
  
n - t - 3 & \mbox{ if }k = 2t + 1,\\
n - t - 5 & \mbox{ if }k = 2t + 2.
\end{array}\right.
\end{equation*}

The paper~\cite{main} also describes the $2$-connected $n$-vertex graphs with $c(G) < k \leq 8$ for all $n\geq k$. 

\subsection{The essence of the main result}
Together with~\cite{main},
 this paper  gives a full description of the 2-connected $n$-vertex graphs with $c(G) < k$ and `many' edges for all $k$ and $n$.
 Our main result is:

\begin{thm}\label{mainthm}
Let $t \geq 4$ and $k \in \{2t+1, 2t+2\}$, so that $k \geq 9$. 
If $G$ is a $2$-connected graph on $n \geq k$ vertices and $c(G) < k$, then either $e(G) \leq \max\{h(n,k,t-1), h(n,k,3)\}$ or  
\vspace{-4mm}
\begin{center}
\begin{tabular}{lp{5.8in}}
$(a)$ & $k = 2t + 1$ and $G \subseteq  H_{n,k,t}$ or \\
$(b)$ & $k = 2t + 2$  and $G - A$ is a star forest for some $A \subseteq V(G)$ of size at most $t$. \\
$(c)$ & $G \subseteq H_{n,k,2}$.
\end{tabular}
\end{center}
\end{thm}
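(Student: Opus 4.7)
The plan is to split by the size of $n$ and leverage Theorem~\ref{oldmainthm} wherever possible. When $n \geq 3t$, the previous theorem applies directly: either $e(G) \leq h(n,k,t-1) \leq \max\{h(n,k,t-1), h(n,k,3)\}$ and we are done, or extremal case (a) or (b) of Theorem~\ref{oldmainthm} holds, matching cases (a) and (b) here. A routine calculation shows that in this range $h(n,k,2) \leq h(n,k,3)$, so case (c) does not furnish new extremal graphs and is absorbed into the quantitative bound. Thus the genuinely new work is the range $k \leq n < 3t$.

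For this range, I would take a longest cycle $C = c_1 c_2 \ldots c_\ell$ in $G$, so $\ell \leq k-1$. Since $G$ is $2$-connected, every component of $G - V(C)$ sends at least two edges to $V(C)$, and by the maximality of $|C|$ any two neighbors on $C$ of a single off-cycle vertex (or the endpoints of a chord of $C$) must lie ``close'' on $C$; otherwise a Pósa/Kopylov-style swap would produce a cycle of length $\geq k$. Iterating these swaps, I would isolate a small ``core'' set $A \subseteq V(G)$ with $a := |A| \leq t$ such that the vertices outside $A$ form an essentially independent set, up to the star forest allowed in case (b) when $k = 2t+2$. Then $G$ is a subgraph of $H_{n,k,a}$ (when $k = 2t+1$) or of a star-forest augmentation of $H_{n,k,a}$ (when $k = 2t+2$).

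The proof then finishes by a case analysis on $a$. If $a = 2$, we directly obtain $G \subseteq H_{n,k,2}$, which is case (c). If $a = t$, we recover cases (a) or (b) depending on the parity of $k$. For the intermediate values $3 \leq a \leq t-1$, one observes that the function $a \mapsto h(n,k,a) = \binom{k-a}{2} + a(n-k+a)$ has second derivative $h''(a) = 3 > 0$, hence is convex in $a$; therefore $e(G) \leq h(n,k,a) \leq \max\{h(n,k,3), h(n,k,t-1)\}$, meeting the desired bound.

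The main obstacle is the structural argument for $n$ close to $k$: the longest cycle is nearly spanning, so the usual leverage of removing vertices outside $C$ is weakened, and one must work almost entirely inside $V(C)$. The most delicate subcase is $k = 2t + 2$, where one has to argue that the edges spanned by $V(G) \setminus A$ really form a star forest rather than some denser configuration, and that the chords inside $V(C)$ respect the no-long-cycle constraint. I expect to need several sublemmas in the spirit of those in~\cite{main}, adapted to the small-$n$ regime where the transition between the two extremal graphs $H_{n,k,2}$ and $H_{n,k,t}$ takes place, in order to rule out near-extremal configurations that are neither star-forest augmentations nor subgraphs of the two $H_{n,k,\cdot}$'s.
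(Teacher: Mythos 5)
Your reduction of the range $n \geq 3t$ to Theorem~\ref{oldmainthm} is correct (since $k \geq 9$ excludes the exceptional case $k=7$ there), although the justification you give for absorbing case (c) is off: what one actually needs is $h(n,k,2) \leq h(n,k,t-1)$, which holds for all $n \geq k+t/2$ and hence for $n \geq 3t$; the inequality $h(n,k,2)\leq h(n,k,3)$ that you invoke is equivalent to $n \geq 2k-8$ and fails at $n=3t$ once $t \geq 7$. The convexity observation $h''(a)=3>0$, so that $h(n,k,a)\leq \max\{h(n,k,3),h(n,k,t-1)\}$ for $3\leq a\leq t-1$, is correct and is in fact exactly how the paper disposes of the intermediate values of $a$ at the end of its argument.

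The genuine gap is the entire structural step for $k \leq n < 3t$. You assert that iterating P\'osa/Kopylov-type swaps on a longest cycle ``isolates'' a set $A$ with $|A|\leq t$ outside of which $G$ is essentially independent (or a star forest), so that $G$ sits inside $H_{n,k,a}$ or a star-forest augmentation of it; but this assertion \emph{is} the theorem, and no mechanism is offered for producing $A$, for bounding $|A|$ by $t$, or for controlling the chords of the longest cycle. In the regime where $n$ is close to $k$ the longest cycle is nearly spanning, rotations buy almost nothing, and the whole difficulty is to classify which chord structures a dense non-hamiltonian graph on roughly $k$ vertices can carry. The paper attacks this quite differently: it contracts edges (Lemmas~\ref{con} and~\ref{hhh}) down to a graph $G_k$ on exactly $k$ vertices, applies Chv\'atal's Theorem~\ref{t1} to extract $r$ vertices of degree at most $r$ with either $r=t$ or $r$ small, proves by a delicate edge count that the $t$-core of $G$ is nonempty (Claim~\ref{dis}), and then passes to the $k$-closure $G^*$ and runs Kopylov's disintegration: the $t$-core of $G^*$ is a clique on $\ell$ vertices with $t+2\leq \ell\leq k-2$, the $(k-\ell)$-disintegration either produces a cycle of length at least $k$ or gives $e(G^*)\leq h(n,k,k-\ell)$, and convexity then forces $k-\ell=2$, i.e.\ $G^*=H_{n,k,2}$. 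It also needs the separate induction on $n$ through the classes $\mathcal F(k)$ (Lemmas~\ref{oldmain2} and~\ref{oldmain3}) to carry the structure back from the contracted graph to $G$ itself. None of this machinery, nor any workable substitute for it, appears in your outline, and you acknowledge as much by deferring to unspecified sublemmas; as it stands the proposal is a plan rather than a proof.
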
 

 \begin{figure}[!ht]
  \centering
    \includegraphics[height=0.15\textwidth]{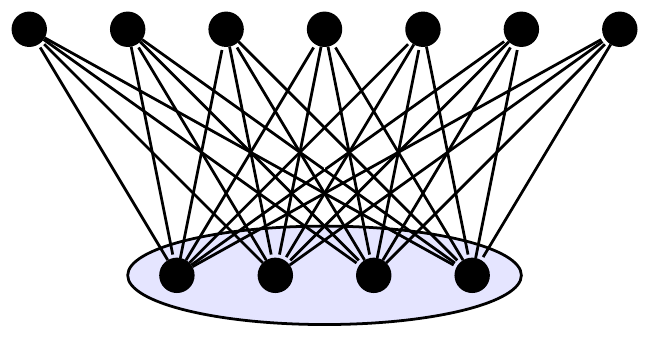}\;\;\;
    \includegraphics[height=0.15\textwidth]{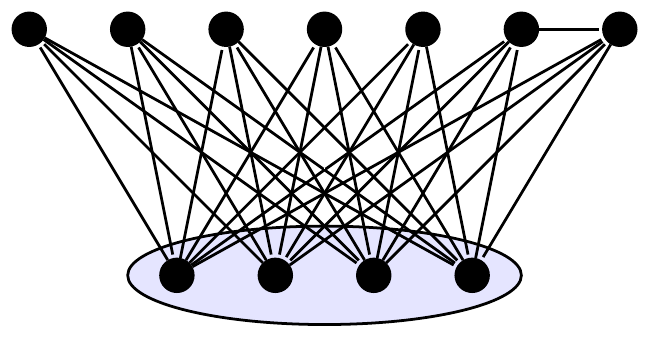}\;\;\;
    \includegraphics[height=0.15\textwidth]{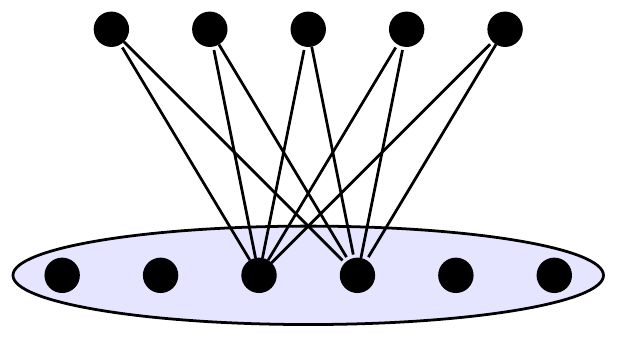}
  \caption{$H_{n,k,t} (k = 2t+1), H_{n,k,t} (k = 2t + 2), H_{n,k,2}$;\\
${}$ \quad\quad\quad\quad   ovals denote complete subgraphs of order $t$, $t$, and $k-2$ respectively.}
\end{figure}

Note that the case $n<k$ is trivial and the case $k\leq 8$ was fully resolved in~\cite{main}.

\subsection{A more detailed form of the main result}
In order to prove Theorem~\ref{mainthm}, we need a more detailed description of graphs satisfying (b) in the theorem that do not contain
`long' cycles.


Let ${\mathcal G}_1(n,k) = \{H_{n,k,t},H_{n,k,2}\}$. 
Each $G\in {\mathcal G}_2(n,k)$ is defined by a partition $V(G)=A\cup B\cup C$ and two vertices $a_1\in A$, $b_1\in B$
 such that  $|A| = t$, $G[A]=K_t$, $G[B]$ is the empty graph, $G(A,B)$ is a complete bipartite graph, and 
 $N(c)=\{a_1,b_1\}$ for every $c \in C$.
 Every member of $G\in {\mathcal G}_3(n,k)$ is defined by a partition $V(G)=A\cup B\cup J$
 such that  $|A|=t$, $G[A]=K_t$, $G(A,B)$ is a complete bipartite graph, and
\newline ${}\quad$ 
---  $G[J]$ has more than one component, \\
 ${}\quad$ --- all components of $G[J]$ are stars with at least two vertices each, \\
 ${}\quad$ ---  there is a $2$-element subset $A'$ of $A$ such that $N(J)\cap (A\cup B)=A'$, \\
 ${}\quad$ --- for every component $S$ of $G[J]$ with at least $3$ vertices, all leaves of $S$ have degree 2 in $G$ and are adjacent to the same vertex $a(S)$ in $A'$.

The class $\mathcal{G}_4(n,k)$ is empty unless $k = 10$. Each graph $H \in {\mathcal G}_4(n,10)$ has a $3$-vertex set $A$ such that $H[A]=K_3$ and $H - A$ is a star forest such that
if a component $S$ of $H - A$ has more than two vertices then all its leaves have degree 2 in $H$ and are adjacent to the same vertex $a(S)$ in $A$.
These classes are illustrated below:
\begin{figure}\label{nfi}
\begin{center}
\includegraphics[width=6.5in]{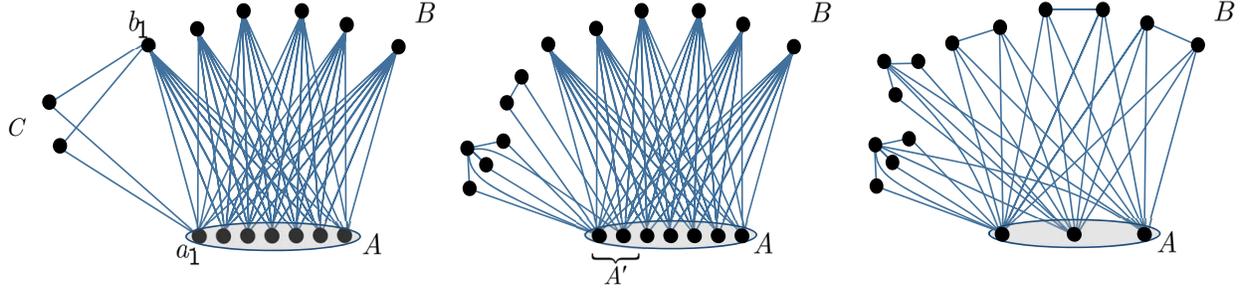}
\caption{Classes $\mathcal{G}_2(n,k)$, $\mathcal{G}_3(n,k)$ and $\mathcal{G}_4(n,10)$.}
\end{center}
\end{figure}




We can refine Theorem~\ref{mainthm} in terms of
the classes $\mathcal{G}_i(n,k)$ as follows:

\begin{thm}\label{main} {\rm (Main Theorem)}
Let $k \geq 9$, $n \geq k$ and $t=\left\lfloor \frac{1}{2}(k-1)\right\rfloor$. Let $G$ be an  $n$-vertex $2$-connected graph  with no cycle of length at least $k$. Then
 $e(G)\leq \max\{h(n,k,t-1),h(n,k,3)\}$  or  $G$ is a subgraph of a graph in $\mathcal{G}(n,k)$, where
\vspace{-4mm} 
\begin{center}
 \begin{tabular}{lp{5.8in}}
$(1)$ & if  $k$ is odd, then  $\mathcal{G}(n,k)= \mathcal{G}_1(n,k)=\{H_{n,k,t},H_{n,k,2}\}$;\\
$(2)$ & if $k$ is even and $k \neq 10$, then  $\mathcal{G}(n,k)=\mathcal{G}_1(n,k) \cup \mathcal{G}_2(n,k) \cup \mathcal{G}_3(n,k)$; \\
$(3)$ & if  $k = 10$, then  $\mathcal{G}(n,k)=\mathcal{G}_1(n,10) \cup \mathcal{G}_2(n,10) \cup \mathcal{G}_3(n,10) \cup \mathcal{G}_4(n,10)$.
\end{tabular}
\end{center}
\end{thm}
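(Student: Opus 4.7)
The plan is to treat Theorem~\ref{mainthm} as a black box and post-process its conclusions. If $e(G)\leq\max\{h(n,k,t-1),h(n,k,3)\}$ we are done, so I may assume one of (a), (b), (c) of that theorem holds. Conclusion (a), possible only for $k=2t+1$, gives $G\subseteq H_{n,k,t}\in\mathcal{G}_1(n,k)$, and conclusion (c) gives $G\subseteq H_{n,k,2}\in\mathcal{G}_1(n,k)$. Together these dispose of the odd case entirely and handle one subcase when $k$ is even. What remains is to refine conclusion (b) in the case $k=2t+2$: assuming there is $A\subseteq V(G)$ with $|A|\leq t$ and $G-A$ a star forest, I must show that $G$ is a subgraph of some member of $\mathcal{G}_2(n,k)\cup\mathcal{G}_3(n,k)$, or, in the sole exception $k=10$, of $\mathcal{G}_4(n,10)$.

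\textbf{Canonical configuration.} Fix such an $A$, write $J=V(G)\setminus A$, and let $S_1,\dots,S_p$ be the components of the star forest $G[J]$. I first enlarge $A$ greedily while preserving the property that $G-A$ is a star forest, so that $|A|$ is as large as possible. Let $B$ be the set of vertices of $J$ that are isolated in $G[J]$ and completely joined to $A$; by maximality of $A$, every other isolated vertex of $J$ has $G$-neighbourhood strictly smaller than $A$. Since I aim to embed $G$ in a host graph $H$ with $H[A]=K_t$ and all $A$-$B$ edges present (these being forced features of $\mathcal{G}_2$, $\mathcal{G}_3$, $\mathcal{G}_4$), I only have to determine the constraints on how the nontrivial stars and the remaining ``small-degree'' vertices of $J$ attach to $A$.

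\textbf{Restricting attachments by cycle splicing.} The key tool is that $K_t$ on $A$ has a Hamilton path between every pair of prescribed endpoints, and that $B$-vertices can be freely interleaved between consecutive $A$-vertices along such a path. Threading explicit cycles through $A$, through one or two stars of $J$, and through enough of $B$, one tries to reach length $\geq 2t+2=k$; the resulting contradictions are meant to force: (i) in every nontrivial star $S_i$ with $\geq 3$ vertices, each leaf has degree exactly $2$ in $G$ and shares a single common $A$-neighbour $a(S_i)$; (ii) all $A$-neighbours of the centres of nontrivial stars and of the vertices of every $K_2$-component of $G[J]$ lie in a common $2$-set $A'\subseteq A$; and (iii) any isolated vertex of $G[J]$ not in $B$ has $G$-neighbourhood equal to a common pair $\{a_1,b_1\}$ with $a_1\in A$, $b_1\in B$. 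Items (i) and (ii) place $G$ inside a member of $\mathcal{G}_3(n,k)$ when $G[J]$ has more than one nontrivial component, while (iii) places $G$ inside a member of $\mathcal{G}_2(n,k)$ when there is at most one nontrivial component. The extra family $\mathcal{G}_4(n,10)$ should appear only in the boundary case $t=4$, $k=10$, where maximality can leave $|A|=3$ and the splicing arguments of (i)--(iii) no longer suffice to produce a cycle of length $k$.

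\textbf{Main obstacle.} The cycle-splicing is tight: the target length $2t+2$ accounts for exactly the $t$ vertices of $A$, roughly $t$ interleaved $B$-vertices, and a two-vertex detour through a star (or through the ``$c$''-vertices of $\mathcal{G}_2$). Every splice has to count, and the parity $k=2t+2$ is precisely what lets $\mathcal{G}_2$ and $\mathcal{G}_3$ appear (their odd counterparts would give an extra vertex of slack and be absorbed into $\mathcal{G}_1$). The exception $k=10$ will require a short separate case check confirming that only the $\mathcal{G}_4(n,10)$ structure escapes the splicing arguments and that no other small-$t$ anomaly occurs. Organising these path concatenations uniformly across all $t\geq 4$, together with the bookkeeping needed to rule out configurations between $\mathcal{G}_2$ and $\mathcal{G}_3$ (e.g.\ a lone star coexisting with ``$c$''-vertices on a different edge), is where I expect the main technical work to lie.
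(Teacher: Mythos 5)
Your plan has a circularity problem at its root. You propose to treat Theorem~\ref{mainthm} as a black box, but Theorem~\ref{mainthm} is itself one of the new results of this paper: the earlier paper~\cite{main} (Theorem~\ref{oldmainthm}) only establishes that statement for $n\geq 3t$, and the entire point of the present paper is the range $k\leq n<3k/2$. In the paper's logical order, Theorem~\ref{main}$'$ is what gets proved directly (by the contraction procedures (BP)/(MBP), Kopylov's disintegration and the $k$-closure argument for $n\leq k+(t-1)/2$, and induction on $n$ otherwise), and Theorems~\ref{main} and~\ref{mainthm} are read off from it. So deriving Theorem~\ref{main} from Theorem~\ref{mainthm} proves nothing new unless you first supply an independent proof of Theorem~\ref{mainthm} for small $n$, which your proposal does not contain.

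Even granting Theorem~\ref{mainthm}, the refinement of its conclusion $(b)$ is where the real work lies, and your sketch assumes the hardest part. Knowing only that $G-A$ is a star forest for some $|A|\leq t$ tells you nothing about the edges between $A$ and $J=V(G)\setminus A$; in particular it does not give you a large set $B$ completely joined to $A$. You declare the complete join $A$--$B$ to be a ``forced feature'' of the target classes and then splice cycles ``through enough of $B$,'' but without first proving that such a $B$ of size roughly $n-t$ exists (which requires using the hypothesis $e(G)>\max\{h(n,k,t-1),h(n,k,3)\}$ in an essential way), none of the splicing arguments can reach length $k=2t+2$. For instance, two degree-$2$ vertices of $J$ attached to disjoint pairs in $A$ are only excluded because one can route a $\geq k$-cycle through them, $A$, and many $B$-vertices --- impossible if $B$ is small. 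The paper supplies exactly this missing density structure via the families $\mathcal{F}(k)$: Lemma~\ref{co28} and Lemma~\ref{oldmain2} show $G$ contains a dense bipartite-type subgraph in $\mathcal{F}(k)$, and Lemma~\ref{oldmain3} (imported from~\cite{main}) then performs the case analysis you sketch --- including the classification into $\mathcal{G}_2$, $\mathcal{G}_3$, and the $k=10$ exception $\mathcal{G}_4$. Your proposal would in effect have to reprove Lemma~\ref{oldmain3} from a weaker hypothesis, and as stated the ``canonical configuration'' step does not get off the ground.
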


Since every graph in $\mathcal{G}_2(n,k)\cup \mathcal{G}_3(n,k)$ and many graphs in $\mathcal{G}_4(n,k)$ have a separating set of size $2$ (see
Fig.~3),
 the theorem implies the following simpler statement for $3$-connected graphs:
\begin{cor}\label{maincor}
Let $k \in \{2t+1, 2t+2\}$ where $k \geq 9$. If $G$ is a $3$-connected graph on $n \geq k$ vertices and $c(G) < k$,
 then either $e(G) \leq \max\{h(n,k,t-1), h(n,k,3)\}$ or $G \subseteq H_{n,k,t}$ or $k=10$ and $G$ is a subgraph of some  graph $H \in \mathcal G_4(n,10)$ such that each component of $H - A$ has at most 2 vertices. 
\end{cor}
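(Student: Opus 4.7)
The plan is to deduce Corollary 2.5 directly from the Main Theorem. Any $3$-connected graph is in particular $2$-connected, so Theorem 2.4 applies to $G$ and yields either the edge bound $e(G)\le \max\{h(n,k,t-1),h(n,k,3)\}$ or a spanning containment $G\subseteq H$ for some $H\in \mathcal{G}(n,k)$. In the former case we are done, so assume the latter. The strategy is to show that if $G$ is $3$-connected then $H$ must belong to the restricted subfamily stated in the corollary, by exhibiting an explicit vertex cut of size at most $2$ in $H$ for every other candidate class. Since $G$ and $H$ share the same vertex set and $G\subseteq H$, any vertex cut of $H$ is also a vertex cut of $G$, contradicting $3$-connectivity.

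The case analysis parallels the four classes. For $H=H_{n,k,2}\in \mathcal{G}_1(n,k)$, the $2$-element set $A$ separates $B$ from $C$, both non-empty since $|C|=k-4\ge 5$ and $|B|=n-k+2\ge 2$. For $H\in \mathcal{G}_2(n,k)$ with $C\neq \emptyset$, the pair $\{a_1,b_1\}$ isolates $C$ because $N(c)=\{a_1,b_1\}$ for every $c\in C$; and when $C=\emptyset$ one has $H=K_t\vee \overline{K_{n-t}}\subseteq H_{n,k,t}$, so the desired conclusion already holds. For $H\in \mathcal{G}_3(n,k)$, the distinguished set $A'\subseteq A$ of size $2$ separates the non-empty $J$ (which has at least $4$ vertices by definition) from $(A\cup B)\setminus A'$, which is non-empty because $|A|=t\ge 4$. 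Finally, for $H\in \mathcal{G}_4(n,10)$, suppose some component $S$ of $H-A$ has at least three vertices and let $\ell$ be a leaf of $S$; by the definition of $\mathcal{G}_4$, $\ell$ has degree exactly $2$ in $H$, with neighbors the center $s$ of $S$ and the fixed vertex $a(S)\in A$, so $\{s,a(S)\}$ is a $2$-cut of $H$ that isolates $\ell$. Hence under $3$-connectivity every component of $H-A$ must have at most two vertices, which is precisely the remaining alternative in the corollary.

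Combining the above observations eliminates $H_{n,k,2}$ and every member of $\mathcal{G}_2(n,k)$, $\mathcal{G}_3(n,k)$, and the ``large-component'' part of $\mathcal{G}_4(n,10)$, leaving only $H=H_{n,k,t}$ or the restricted subfamily of $\mathcal{G}_4(n,10)$ described in the statement. A final inspection confirms that $H_{n,k,t}$ is indeed $3$-connected, with connectivity equal to $|A|=t\ge 4$: removing fewer than $t$ vertices leaves a non-empty piece of $A$ adjacent to every other remaining vertex, so the residue is connected.

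The only obstacle here is routine bookkeeping: one must verify that each asserted separator truly disconnects two non-empty pieces of $H$ in the full admissible range $k\ge 9$, $n\ge k$, $t\ge 4$. No structural idea beyond the Main Theorem is required, and this is what makes the corollary a direct consequence rather than an independent argument.
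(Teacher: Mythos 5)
Your proof is correct and follows exactly the route the paper takes: the paper justifies the corollary with the single observation that every graph in $\mathcal{G}_2(n,k)\cup\mathcal{G}_3(n,k)$, the graph $H_{n,k,2}$, and the members of $\mathcal{G}_4(n,10)$ with a large star component all contain a separating set of size $2$, which any spanning subgraph inherits. You have simply written out the explicit $2$-cuts (and the harmless degenerate case $C=\emptyset$ in $\mathcal{G}_2$), so this matches the intended argument.
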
 

\section{The proof idea}
\subsection{Small dense subgraphs}
 First we define some more graph classes. For a graph $F$ and a nonnegative integer $s$, we denote by $\mathcal{K}^{-s}(F)$ the family of graphs obtained from $F$ by deleting at most $s$ edges.

Let  $F_0=F_0(t)$ denote the complete bipartite graph $K_{t,t+1}$ with partite sets $A$ and $B$ where $|A|=t$ and $|B|=t+1$.
Let $\mathcal{F}_0=\mathcal{K}^{-t+3}(F_0)$, i.e., the family of subgraphs of $K_{t,t+1}$ with at least $t(t+1)-t+3$ edges.

Let $F_1=F_1(t)$ denote the complete bipartite graph $K_{t,t+2}$ with partite sets $A$ and $B$ where $|A|=t$ and $|B|=t+2$.
Let $\mathcal{F}_1=\mathcal{K}^{-t+4}(F_1)$, i.e., the family of subgraphs of $K_{t,t+2}$ with at least $t(t+2)-t+4$ edges.

Let $\mathcal{F}_2$ denote the family of graphs obtained from a graph in  $\mathcal{K}^{-t+4}(F_1)$
by subdividing an edge $a_1b_1$ with a new vertex $c_1$, where $a_1\in A$ and $b_1\in B$.
Note that any member $H\in \mathcal{F}_2$ has at least $|A||B|-(t-3)$ edges between $A$ and $B$ and the
 pair $a_1b_1$ is not an edge.

Let $F_3=F_3(t,t')$ denote the complete bipartite graph $K_{t,t'}$ with partite sets $A$ and $B$ where $|A|=t$ and $|B|=t'$.
Take a graph from  $\mathcal{K}^{-t+4}(F_3)$,  select two non-empty subsets $A_1$, $A_2\subseteq  A$ with $|A_1\cup A_2|\geq 3$
 such that $A_1\cap A_2=\emptyset$ if $\min\{ |A_1|, |A_2|\}=1$,
 add two vertices $c_1$ and $c_2$, join them to each other and add the edges from $c_i$ to the elements of $A_i$, ($i=1,2$).
The class of obtained graphs is denoted by ${\mathcal F}(A,B,A_1, A_2)$.
The family $\mathcal{F}_3$ consists of these graphs when $|A|=|B|=t$,
 $|A_1|=|A_2|=2$ and $A_1\cap A_2=\emptyset$. In particular, $\mathcal{F}_3(4)$ consists of exactly one graph, call it $F_3(4)$.

Graph $F_4$ has vertex set $A\cup B$, where $A=\{a_1,a_2,a_3\}$ and $B:=\{ b_1, b_2, \dots , b_6\}$ are disjoint. Its  edges
are the edges of the complete bipartite graph $K(A,B)$ and three extra edges $b_1b_2$, $b_3b_4$, and $b_5b_6$ (see Fig.~4 below).
Define $F_4'$ as the (only) member of ${\mathcal F}(A,B,A_1, A_2)$ such that $|A|=|B|=t=4$, $A_1=A_2$, and $|A_i|=3$.
Let $\mathcal{F}_4:= \{ F_4, F'_4\}$, which is defined only for $t=4$.

 \begin{figure}[!ht]\label{fr}
  \centering
    \includegraphics[height=0.15\textwidth]{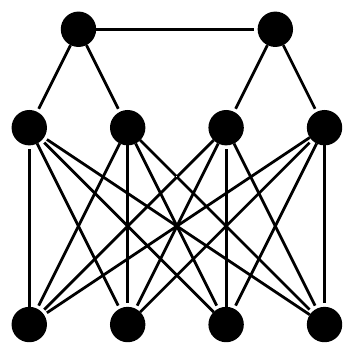}\;\;\;
    \includegraphics[height=0.15\textwidth]{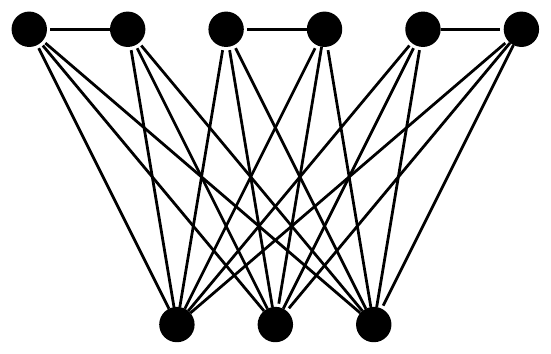}\;\;\;
    \includegraphics[height=0.15\textwidth]{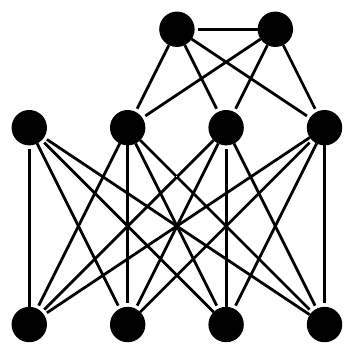}
  \caption{Graphs $F_3(4), F_4,$ and $F_4'$.}
\end{figure}

Define $\mathcal F(k):=\left\{\begin{array}{ll} \mathcal F_0, &\mbox{if $k$ is odd},\\
\mathcal{F}_1\cup\dots \cup \mathcal{F}_4, &  \mbox{if $k$ is even.}\end{array}\right.$

%
%



\subsection{Proof idea}
For our proof, it will be easier to use the stronger induction assumption that the graphs in question contain certain dense graphs from $\mathcal F(k)$. We will prove the following slightly stronger version of Theorem~\ref{main} which also implies Theorem~\ref{mainthm}. 

{\bf Theorem~\ref{main}$'$}\label{mainthm2}
\emph{Let $t\geq 4$, $k \in \{2t+1, 2t+2\}$, and $n \geq k$. Let $G$ be an  $n$-vertex $2$-connected graph  with no cycle of length at least $k$. Then
 $e(G)\leq \max\{h(n,k,t-1), h(n,k,3)\}$  or  
\vspace{-4mm}
 \begin{center}
\begin{tabular}{lp{5.8in}}
$(a)$ & $G \subseteq H_{n,k,2}$, or \\
$(b)$ & $G$ is contained in a graph in $\mathcal G(n,k) - \{H_{n,k,2}\}$, and $G$ contains a subgraph $H \in \mathcal F(k)$. \\
\end{tabular}
\end{center}}
%

The method of the proof is a variation of that of~\cite{main}. Also, when $n$ is close to $k$, we use Kopylov's disintegration method. We take an $n$-vertex graph $G$ satisfying the hypothesis of Theorem~\ref{main}$'$, and iteratively contract edges in a certain way so that each intermediate graph still satisfies the hypothesis. We consider the final graph of this process $G_m$ on $m$ vertices and show that $G_m$ satisfies Theorem~\ref{main}$'$. Two results from~\cite{main} will be instrumental. The first is:


\begin{lem}[Main lemma on contraction~\cite{main}]\label{oldmain2}  Let $k\geq 9$ and suppose $F$ and $F'$ are $2$-connected 
graphs such that $F = F'/xy$ and $c(F') < k$. If $F$ contains a subgraph $H \in \mathcal{F}(k) $, then $F'$ also contains 
a subgraph $H' \in \mathcal{F}(k) $.\qed \end{lem}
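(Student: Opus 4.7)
The plan is to ``uncontract'' the vertex $z \in V(F)$ obtained by identifying $x$ and $y$, and exhibit the desired $H' \subseteq F'$. Since the edge set of $F'$ coincides with that of $F$ away from $z$, the only nontrivial reconstruction is local to $z$. For every neighbor $u$ of $z$ in $H$, at least one of $xu, yu$ lies in $E(F')$, so $N_H(z) = D_x \cup D_y$, where $D_v := \{u \in N_H(z) : vu \in E(F')\}$ for $v \in \{x,y\}$.

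If $z \notin V(H)$, then $H$ is already a subgraph of $F'$ and I take $H' := H$. Assume therefore $z \in V(H)$. I have two reconstructions at my disposal. The \emph{single-vertex substitution} picks $v \in \{x,y\}$ and forms $H_v$ from $H$ by relabeling $z \mapsto v$ and deleting each edge $vu$ with $u \notin D_v$; this preserves $|V(H)|$, and $H_v \subseteq F'$, but $H_v$ has $|N_H(z) \setminus D_v|$ fewer edges than $H$. The \emph{two-vertex split} keeps both $x$ and $y$ together with the edge $xy$, and assigns each edge $zu$ of $H$ to $xu$ or $yu$ as dictated by $F'$; this yields a subgraph on $|V(H)|+1$ vertices contained in $F'$. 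Only the single-vertex substitution is available when $H \in \mathcal{F}_0$ (odd $k$), since every member of $\mathcal{F}_0$ has exactly $2t+1$ vertices. The two-vertex split is tailored to even $k$: the families $\mathcal{F}_2, \mathcal{F}_3, \mathcal{F}_4$ explicitly admit one or two extra low-degree vertices attached to a near-complete bipartite skeleton, which is exactly the shape the split produces.

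A family-by-family verification then completes the argument. If $H \in \mathcal{F}_0$, I show that at least one of $H_x, H_y$ retains enough edges to remain in $\mathcal{F}_0$. If $H \in \mathcal{F}_1$ and the single-vertex substitution fails, the two-vertex split produces a graph with $x$ and $y$ adjacent and of the appropriate degrees, yielding $H' \in \mathcal{F}_2$ (the subdivision of an edge of $F_1$ by $x$ or $y$). If $H \in \mathcal{F}_2$ already carries one extra vertex, a split may promote $H'$ into $\mathcal{F}_3$ (or, for $t = 4$, into $\mathcal{F}_4$), using the flexibility of having two adjacent extra vertices $c_1, c_2$ in the $\mathcal{F}_3$ description. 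These verifications are essentially bookkeeping: one tracks how many edges of the target complete bipartite skeleton are missing after the split and checks that the deficit stays within the allowed budget ($t-3$ or $t-4$).

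The hard part will be the odd-$k$ case, because $\mathcal{F}_0$ has no ``buffer family'' to absorb a split vertex, so one must show that at least one of $H_x, H_y$ lies in $\mathcal{F}_0$. I expect to argue by contradiction: if neither does, then both $D_x \setminus D_y$ and $D_y \setminus D_x$ are nonempty (beyond the missing-edge slack), and one can build a cycle in $F'$ of length at least $k$ using the edge $xy$, edges from $x$ to $D_x$, edges from $y$ to $D_y$, and a long path through the near-complete $K_{t,t+1}$-skeleton of $H$. Certifying the existence of this long path inside the deficient bipartite graph is the delicate combinatorial step, and will rely on the fact that $H$ is missing at most $t-3$ edges together with a Chv\'atal--Erd\H{o}s-type Hamilton-path argument in the bipartite skeleton, contradicting the hypothesis $c(F') < k$.
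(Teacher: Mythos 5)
This lemma is quoted here from~\cite{main} with no proof supplied (the \qed follows the statement), so there is no in-paper argument to measure you against; I can only judge the proposal on its own terms. Your setup is the natural one --- uncontract $z=x*y$, observe $N_H(z)=D_x\cup D_y$, and try either a single-vertex substitution or a two-vertex split --- but as written the proposal is a roadmap in which the decisive steps are exactly the ones deferred with ``I expect to argue'' and ``will rely on.''

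Two gaps are substantive. First, the heart of the lemma is the case where neither $|N_H(z)\setminus D_x|$ nor $|N_H(z)\setminus D_y|$ fits within the remaining edge-deficit budget ($t-3$ for $\mathcal{F}_0$, $t-4$ for the even families), so that no single-vertex substitution works. There you must produce a cycle of length at least $k$ in $F'$, and the required ingredient is a near-spanning path of the deficient bipartite skeleton with \emph{prescribed} endpoints, one in $D_x\setminus D_y$ and one in $D_y\setminus D_x$, of the correct parity depending on whether $z$ lies in the side of size $t$ or the larger side. A Chv\'atal--Erd\H{o}s-type Hamiltonicity criterion does not hand you prescribed endpoints, and the skeleton can have vertices of degree as low as $4$ when the missing edges are concentrated; this step is the entire content of the lemma and must actually be proved, separately for $z$ in each side and for each family. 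Second, your even-$k$ analysis is not closed under the operations you describe: $H$ may itself lie in $\mathcal{F}_3$ or $\mathcal{F}_4$ (and $z$ may be one of the special vertices $c_1,c_2$), and there is no further family to absorb a two-vertex split of such an $H$. For those inputs you must show that a single-vertex substitution always succeeds or else extract a long cycle, and the proposal never addresses them; likewise ``a split may promote $H'$ into $\mathcal{F}_3$'' leaves unverified the structural side conditions of $\mathcal{F}_3$ (the sets $A_1,A_2$, their sizes, and their disjointness). Until the prescribed-endpoint path lemma is stated and proved and the $\mathcal{F}_3,\mathcal{F}_4$ inputs are handled, this is a plausible plan rather than a proof.
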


This lemma shows that if $G_m$ contains a subgraph $H \in \mathcal F(k)$, then the original graph $G$ also contains a subgraph in $ \mathcal{F}(k) $. 
The second result (proved in Subsection 4.5 of~\cite{main}) is:
\begin{lem}[\cite{main}]\label{oldmain3} Let $k \geq 9$, and let $G$ be a $2$-connected graph with $c(G) < k$ 
and $e(G) > h(n,k,t-1)$. If $G$ contains a subgraph $H \in \mathcal{F}(k) $, then $G$ is a subgraph of a graph in $\mathcal G(n,k) - \{H_{n,k,2}\}$.\qed \end{lem}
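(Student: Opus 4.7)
The plan is to use the dense subgraph $H\in\mathcal{F}(k)$ to expose the ``clique side'' $A$ of the target extremal structure inside $G$, and then to argue that the remaining vertices and edges of $G$ are forced into one of the templates in $\mathcal{G}(n,k)-\{H_{n,k,2}\}$. First, I would fix $H\subseteq G$ with $H\in\mathcal{F}(k)$ and let $A\subseteq V(H)$ be the smaller partite set: of size $t$ when $H\in\mathcal{F}_0\cup\mathcal{F}_1\cup\mathcal{F}_2\cup\mathcal{F}_3$, and of size $3$ when $H\in\mathcal{F}_4$ and $k=10$. The first step is to show that $G[A]$ is a clique. If some $a_1a_2\notin E(G)$, then $2$-connectivity of $G$ yields two internally disjoint $a_1a_2$-paths $P_1,P_2$; using the near-completeness of the bipartite-like structure of $H$ one reroutes $P_1$ as an alternating path through the large side $B$ of $H$ of length at least $k-1$, and concatenating with $P_2$ produces a cycle of length $\geq k$, contradicting $c(G)<k$. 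A very similar path-and-close argument forces every $a\in A$ to be adjacent to essentially the whole large side of $H$.

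Next, I would classify $V(G)\setminus A$ by the degree $d_A(v):=|N(v)\cap A|$, setting $B:=\{v\notin A:d_A(v)\geq 2\}$ and $J:=V(G)\setminus(A\cup B)$. The second step is to prove that $B$ is independent and that every $v\in B$ is adjacent to every vertex of $A$. An edge $bb'$ inside $B$, together with the almost-$K_{t,|B|}$ structure on $G[A\cup B]$, yields an alternating path of length $\geq k-1$ in $G[A\cup B]$ with both endpoints in $A$, which can then be closed through a common $A$-neighbor of $b$ and $b'$; a missing edge $av$ with $a\in A$, $v\in B$ is ruled out symmetrically. At this stage $G[A\cup B]$ is contained in the ``$H_{\,\cdot\,,k,t}$-core'' common to every template in $\mathcal{G}(n,k)-\{H_{n,k,2}\}$, and in particular if $J=\emptyset$ one already obtains $G\subseteq H_{n,k,t}\in\mathcal{G}_1$.

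Finally, I would analyze $G[J]$ and its attachments to $A\cup B$; this is where the main obstacle lies. Repeating the path-and-close technique, I would show that each component $S$ of $G[J]$ is a star, that the attachment set $N(V(S))\cap(A\cup B)$ lies in a small subset $A'$ of $A$ (of size $\leq 2$ in general, of size $\leq 3$ in the sporadic case $|A|=3$, $k=10$), and that whenever $|V(S)|\geq 3$ all leaves of $S$ attach to a single $a(S)\in A'$. Indeed, a non-star $S$ or two attachments hitting three different vertices of $A$ would yield two internally disjoint paths through $S$; combining them with an alternating path in $G[A\cup B]$ that terminates at the appropriate pair of $A'$-vertices produces a cycle of length $\geq k$. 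Splitting into the subcases $|A'|=1$ combined with a single attachment in $B$ (which gives $\mathcal{G}_2$), $|A'|=2$ (which gives $\mathcal{G}_3$), and the sporadic $|A|=3$ case (which gives $\mathcal{G}_4(n,10)$), and accounting for the parity of $k$, places $G$ inside the correct class. The hypothesis $e(G)>h(n,k,t-1)$ is used throughout to exclude borderline configurations in which $J$ is spread out over too many vertices of $A$ or carries too few edges to witness the claimed structure; keeping careful bookkeeping of this edge count is the most delicate part of the plan.
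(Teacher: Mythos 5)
The paper does not actually prove Lemma~\ref{oldmain3}: it is imported verbatim from~\cite{main} (proved in Subsection~4.5 there), so there is no in-paper argument to compare against. Judged on its own terms, your plan has the right overall shape --- extract the set $A$ from the dense bipartite witness $H\in\mathcal{F}(k)$ and show the rest of $G$ attaches to $A$ in a controlled way, which is indeed the spirit of the argument in~\cite{main} --- but several of your intermediate claims are false, and they are falsified precisely by graphs that the lemma's conclusion permits. (i) You set out to prove that $G[A]$ is a clique, but the conclusion only asserts that $G$ is a \emph{subgraph} of a template: $H_{n,k,t}$ minus one edge inside $A$ satisfies every hypothesis while $G[A]$ is incomplete, so no contradiction can follow from $a_1a_2\notin E(G)$; concretely, the longest $a_1a_2$-path you can thread through $A\cup B$ has roughly $2t-2$ edges, not the $k-1$ you assert. (ii) You claim $B:=\{v:|N(v)\cap A|\geq 2\}$ is independent and complete to $A$. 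For $k=2t+2$ the extremal graph $H_{n,k,t}$ itself contains two adjacent vertices (the set $C$) each complete to $A$, and the cycle you would build through an edge $bb'$ of $B$ has only $2t+1=k-1$ vertices, one short of a contradiction; similarly, graphs in $\mathcal{G}_3(n,k)$ may contain $J$-vertices with exactly two neighbours in $A'\subseteq A$ that are far from complete to $A$. As written, your argument would ``exclude'' the extremal graphs themselves.

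The underlying problem is that each of your path-and-close estimates is off by one or two edges, and one or two edges is exactly the margin on which this lemma lives: it is why the even case requires the additional families $\mathcal{G}_2,\mathcal{G}_3,\mathcal{G}_4$, and why $\mathcal{F}_1$ tolerates one more missing edge than $\mathcal{F}_0$. Relatedly, you never specify where the hypothesis $e(G)>h(n,k,t-1)$ actually enters; this bound is what rules out sparse attachments (long subdivided paths, stars hanging off too many vertices of $A$, large sets of low-degree vertices) that $2$-connectivity and $c(G)<k$ alone cannot exclude, so deferring it to ``bookkeeping'' hides the crux. A correct write-up must work with ``$G$ is contained in a template'' rather than ``$G$ equals a template'', carry out the cycle-length counts exactly and separately for $k=2t+1$ and $k=2t+2$, and make the edge count do explicit work at the points where the length counts fall one edge short.
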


We will split the proof into the cases of small $n$ and large  $n$. The following observations can be obtained by simple calculations (for $t\geq 4$):

\vspace{-1mm}
\begin{center}
    \begin{tabular}{| c | c | c |}
    \hline
    $k$ & $h(n,k,3) \geq h(n,k,t-1)$ &  $h(n,k,2) \geq  h(n,k,t-1)$ \\  \hline
    $2t+1$ & If and only if $n \leq k+(t-5)/2$ & If and only if $n \leq k + t/2 -1$ \\
    $2t+2$ & If and only if $n \leq k+(t-3)/2$ & If and only if $n \leq k + t/2$ \\
    \hline
    \end{tabular}
\end{center}

In the case of large $n$  we will contract an edge such that the new graph still has more than $h(n-1,k,t-1)$ edges. 
In order to apply induction, we also need the number of edges to be greater than $h(n-1, k, 3)$. 
To guarantee this, we pick the cutoffs for the two cases $n \leq k + (t-1)/2$ and $n > k + (t-1)/2$ (therefore $n-1 > k + (t-3)/2$).


\section{Tools}\label{tools}
\subsection{Classical theorems}

\begin{thm}[Erd\H{o}s~\cite{Erd62}]\label{th:er}
Let $d\geq 1$ and $n>2d$ be integers, and
\[ \ell_{n,d}=\max\left\{\binom{n-d}{2}+d^2,\binom{\lceil\frac{n+1}{2}\rceil}{2}+{\Big\lfloor\frac{n-1}{2}\Big\rfloor}^2\right\}.\]
Then every $n$-vertex graph $G$ with $\delta(G)\geq d$ and $e(G) > \ell_{n,d}$ is hamiltonian.\qed
  \end{thm}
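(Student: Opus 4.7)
The plan is to argue by contradiction via the Chv\'atal degree-sequence criterion. Assume $e(G)>\ell_{n,d}$ but $G$ is not Hamiltonian, and let $d_1\le d_2\le\cdots\le d_n$ be the sorted degree sequence (note that $n\ge 3$ since $n>2d\ge 2$). Chv\'atal's theorem states that $G$ is Hamiltonian whenever every $i<n/2$ satisfies $d_i>i$ or $d_{n-i}\ge n-i$; its failure supplies some index $i<n/2$ with
\[ d_i\le i \quad\text{and}\quad d_{n-i}\le n-i-1. \]
The hypothesis $\delta(G)\ge d$ gives $d\le d_i\le i$, so this pivot index lies in the range $d\le i\le \lfloor(n-1)/2\rfloor$.

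Next I would bound the edge count by partitioning the sorted degree sequence into three blocks according to the pivot $i$: the smallest $i$ degrees are each at most $i$, the middle $n-2i$ degrees are each at most $n-i-1$, and the largest $i$ degrees are each at most $n-1$. Summing these bounds gives
\[ 2e(G)\;\le\; f(i)\;:=\;i^{2}+(n-2i)(n-i-1)+i(n-1)\;=\;3i^{2}-2ni+i+n^{2}-n. \]
Since $f$ is a convex quadratic in $i$, its maximum over the integer interval $[d,\lfloor(n-1)/2\rfloor]$ is attained at one of the two endpoints.

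The final step is purely computational: expanding $f$ yields $f(d)/2=\binom{n-d}{2}+d^{2}$, and a small case split on the parity of $n$ gives $f(\lfloor(n-1)/2\rfloor)/2=\binom{\lceil(n+1)/2\rceil}{2}+\lfloor(n-1)/2\rfloor^{2}$. Combining these identities yields $e(G)\le \ell_{n,d}$, contradicting the hypothesis $e(G)>\ell_{n,d}$. The conceptual content is compressed into the application of Chv\'atal's theorem together with the convexity remark; the only foreseeable obstacle is keeping the endpoint arithmetic clean across both parities of $n$, which is entirely routine.
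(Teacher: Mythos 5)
Your proposal is correct. Note, however, that the paper does not prove this statement at all: it is quoted as a known classical result of Erd\H{o}s (1962) with the proof omitted, so there is no ``paper's proof'' to compare against. Your derivation from Chv\'atal's degree-sequence criterion (which the paper also quotes, as Theorem~\ref{t1}) is a standard and complete route: the pivot index $i$ satisfies $d\le i\le\lfloor (n-1)/2\rfloor$ because $\delta(G)\ge d$ forces $d\le d_i\le i$ and $i<n/2$; the three-block bound $2e(G)\le i^2+(n-2i)(n-i-1)+i(n-1)=3i^2-2ni+i+n^2-n$ is a convex quadratic in $i$, so it is maximized at an endpoint; and I have checked that $f(d)/2=\binom{n-d}{2}+d^2$ and $f(\lfloor(n-1)/2\rfloor)/2=\binom{\lceil(n+1)/2\rceil}{2}+\lfloor(n-1)/2\rfloor^2$ in both parities of $n$, so indeed $e(G)\le\ell_{n,d}$, a contradiction. (Chv\'atal's hypothesis $n\ge 3$ holds since $n>2d\ge 2$.) The only historical caveat is that Erd\H{o}s's original argument predates Chv\'atal's theorem and proceeds differently, via a P\'osa-type rotation/degree argument; your route trades that for the later, stronger closure-type criterion, which makes the proof shorter at the cost of invoking a heavier black box.
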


\begin{thm}[Chv\' atal~\cite{Ch}]\label{t1} Let $n\geq 3$ and $G$ be an $n$-vertex graph with vertex degrees $d_1\leq d_2\leq\ldots\leq d_n$.
If $G$ is not hamiltonian, then there is some $i<n/2$ such that $d_i\leq i$ and $d_{n-i}<n-i$.\qed
\end{thm}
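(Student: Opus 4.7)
The plan is to use the classical closure/rotation argument. First, I would replace $G$ by a maximal non-Hamiltonian spanning supergraph $G^+$ (which exists since a non-Hamiltonian $G$ cannot equal $K_n$ when $n\geq 3$). Because adding edges only increases each $d_i$, any index $i$ witnessing $d_i^+\leq i$ and $d_{n-i}^+<n-i$ in $G^+$ also witnesses the same inequalities in $G$, so it suffices to prove the conclusion for the edge-maximal graph. The defining feature of $G^+$ is that for every non-adjacent pair $u,v$, adding the edge $uv$ creates a Hamilton cycle through $uv$; removing this edge yields a Hamilton $u$-$v$ path in $G^+$.

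Next I would pick a non-adjacent pair $u,v$ that maximizes $d(u)+d(v)$, and set $d:=d(u)\leq d(v)=:d'$. Given a Hamilton path $v_1=u,v_2,\ldots,v_n=v$, the standard rotation observation is: if $vv_i\in E$ and $uv_{i+1}\in E$ for the same index $i$, then $v_1v_2\ldots v_iv_nv_{n-1}\ldots v_{i+1}v_1$ is a Hamilton cycle, contradicting non-Hamiltonicity. Hence the sets $S=\{i:vv_i\in E\}$ and $T=\{i:uv_{i+1}\in E\}$ are disjoint subsets of $\{1,\ldots,n-1\}$ of sizes $d'$ and $d$, giving $d+d'\leq n-1$. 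From this inequality I derive both degree conditions with $i:=d$: by the maximality of $d(u)+d(v)$, every vertex $w\neq v$ non-adjacent to $v$ satisfies $d(w)+d' \leq d+d'$, hence $d(w)\leq d$; there are $n-1-d'\geq d$ such vertices, so $d_d\leq d$. Similarly, every vertex non-adjacent to $u$ (including $u$ itself) has degree at most $\max(d,d')=d'\leq n-1-d<n-d$, and there are $n-d$ such vertices, so $d_{n-d}<n-d$. Finally $2d\leq d+d'\leq n-1<n$ forces $d<n/2$, so $i=d$ is the desired index.

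The main obstacle is careful bookkeeping: one must be precise about whether the vertices $u$ and $v$ themselves are included in the various non-neighbor counts, and must verify that the conclusion for $G^+$ transfers back to $G$ via the pointwise domination of degree sequences. The rotation step producing $d+d'\leq n-1$ is a standard but slightly delicate index manipulation; apart from that, the proof is essentially a counting argument applied to the critical non-adjacent pair.
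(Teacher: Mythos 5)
The paper does not prove this statement at all: it is quoted as a known classical theorem of Chv\'atal and cited to the literature, so there is no internal proof to compare against. Your argument is correct and is essentially Chv\'atal's original proof (pass to an edge-maximal non-Hamiltonian supergraph, take a non-adjacent pair of maximum degree sum, run the rotation argument along a Hamilton path to get $d+d'\leq n-1$, and count non-neighbors to produce the index $i=d$); the degree-sequence domination step that transfers the conclusion from $G^{+}$ back to $G$ is also handled correctly.
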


\begin{thm}[Kopylov~\cite{Kopy}]\label{le:kop}
If $G$ is $2$-connected and $P$ is an $x,y$-path of $\ell$ vertices, then $c(G)\geq \min \{\ell,d(x,P)+ d(y,P)\}$.\qed
\end{thm}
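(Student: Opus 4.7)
The plan is a pigeonhole plus cycle-construction argument, with 2-connectivity entering to handle an interleaved case. Write $P = v_1 v_2 \cdots v_\ell$ with $x=v_1$, $y=v_\ell$, and set $A = N_G(x) \cap V(P)$, $B = N_G(y) \cap V(P)$, $a = |A| = d(x,P)$, $b = |B| = d(y,P)$. The central observation is a ``crossing chord trick'': if some index $i$ has $v_i \in A$ and $v_{i-1} \in B$, then
\[
  v_1,\, v_i,\, v_{i+1},\, \ldots,\, v_\ell,\, v_{i-1},\, v_{i-2},\, \ldots,\, v_2,\, v_1
\]
is a cycle using all $\ell$ vertices of $P$, so $c(G) \geq \ell$.

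The first step is to check for such a crossing pair. If it exists, we are done since $c(G) \geq \ell \geq \min\{\ell, a+b\}$. Otherwise $A$ and the right-shift $\{v_{j+1} : v_j \in B\}$ are disjoint subsets of $\{v_2,\ldots,v_\ell\}$, so $a+b \leq \ell-1$ and therefore $\min\{\ell,a+b\}=a+b$. We then seek a cycle of length at least $a+b$. The natural combined chord cycle $v_1\, v_{i_s}\, v_{i_s+1} \cdots v_\ell\, v_{j_t}\, v_{j_t-1} \cdots v_2\, v_1$, valid whenever $j_t < i_s$, has length $j_t + \ell - i_s + 1$. Choosing $i_s = \min A$ and $j_t = \max B$ and using $\min A \leq \ell - a + 1$ together with $\max B \geq b$ produces a cycle of length at least $a+b$, \emph{provided} $\max B < \min A$.

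The main obstacle is the remaining case, where the chord endpoints of $x$ and $y$ interleave along $P$, so that no pair $(i_s, j_t)$ with $j_t < i_s$ is useful. Here 2-connectivity is indispensable: a $5$-vertex graph on the path $v_1 v_2 v_3 v_4 v_5$ with extra edges $v_1 v_3$ and $v_3 v_5$ has $a=b=2$ yet its longest cycle has length only $3$, confirming that the statement fails without 2-connectivity. The plan for the interleaved case is to replace $P$ by a better $x,y$-path: among all $x,y$-paths in $G$ with vertex set containing $V(P)$ (or, more robustly, all $x,y$-paths), choose one maximizing a rotation-monotone quantity such as $d(x,P)+d(y,P)$. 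P\'osa-type rotations, which are available because 2-connectivity supplies an internally disjoint alternate $x,y$-path and hence an edge from $v_\ell$ (or $v_1$) to an interior vertex whose rotation keeps $x$ fixed, then let us reroute $P$ into a new $x,y$-path in which either $\ell$ has grown or the chord endpoints no longer interleave, reducing to a previously handled case. The technical heart of the argument, which I expect to be the main difficulty, is to verify that a suitably chosen rotation either triggers the crossing chord case or destroys the interleaving without decreasing the relevant parameter---this is essentially Kopylov's original bookkeeping.
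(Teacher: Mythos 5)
The paper itself offers no proof of this statement --- it is quoted from Kopylov~\cite{Kopy} and marked with \qed as a cited result --- so your attempt has to stand on its own. Its first step is sound: if $v_i\in A$ and $v_{i-1}\in B$ for some $i$, the displayed cycle spans all of $P$, giving $c(G)\geq \ell$; and if no such pair exists, then $A$ and the right-shift of $B$ are disjoint subsets of $\{v_2,\dots,v_\ell\}$, so $a+b\leq \ell-1$ and the target becomes $c(G)\geq a+b$.

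Beyond that there is a genuine gap, in two respects. First, the one sub-case you actually handle, $\max B<\min A$, is vacuous: the path edges $v_1v_2$ and $v_{\ell-1}v_\ell$ force $v_2\in A$ and $v_{\ell-1}\in B$, so $\min A=2$ and $\max B=\ell-1$, and for $\ell\geq 3$ these always interleave. Hence, once the crossing case is excluded, \emph{everything} falls into your ``interleaved case,'' which is precisely where the content of the lemma lies and where $2$-connectivity must be used to route a cycle through vertices off $P$ (your own $5$-vertex example shows chords of $P$ alone cannot suffice). Second, the plan you sketch for that case does not work as stated: a P\'osa rotation at the $y$-end of $P$ yields an $x,y'$-path with $y'\neq y$, so the quantity $d(x,P)+d(y,P)$ --- which refers to the two fixed vertices $x$ and $y$ of the statement --- is neither preserved nor monotone under such rotations, and you give no argument that the process terminates in a configuration you have already handled. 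In effect you have established only the $c(G)\geq\ell$ alternative and deferred the $c(G)\geq d(x,P)+d(y,P)$ alternative, which is the substance of Kopylov's lemma; the proof is incomplete.
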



\subsection{Claims on contractions}

A helpful tool will be 
the following lemma from~\cite{main} on contraction.

\begin{lem}[\cite{main}]\label{con} Let $n\geq 4$ and let $G$ be an  $n$-vertex $2$-connected graph. 
For every $v \in V(G)$, there exists $w \in N(v)$ such that $G/vw$ is $2$-connected. 
 \qed 
\end{lem}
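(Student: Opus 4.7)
The plan is to reduce the lemma to a simple connectivity statement about $G$ itself, and then exploit the block-tree structure of $G - v$. First I would observe that for a vertex $u \neq v,w$, the graph $G/vw - u$ is obtained from the connected graph $G - u$ (connected by $2$-connectivity of $G$) by identifying two vertices, hence is connected. Thus the only possible cut vertex of $G/vw$ is the merged vertex $v^*$, and deleting $v^*$ yields exactly $G - \{v,w\}$. Since $n \geq 4$ guarantees $G/vw$ has at least $3$ vertices, we conclude that
\begin{equation*}
G/vw \text{ is } 2\text{-connected} \iff G - \{v,w\} \text{ is connected.}
\end{equation*}
So the task becomes: for every $v \in V(G)$, find $w \in N(v)$ such that $G - \{v,w\}$ is connected.

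Next I would split on the structure of $G - v$. If $G - v$ is $2$-connected, then since $|V(G-v)| = n-1 \geq 3$, the graph $G - v - w$ is connected for any $w$, so any neighbor of $v$ works. Otherwise, $G - v$ is connected (since $G$ is $2$-connected) but has at least one cut vertex, and its block tree has at least two leaves, i.e., there are at least two \emph{endblocks}. Fix any endblock $B$ of $G - v$ and let $c$ be the unique cut vertex of $G - v$ contained in $B$.

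The key step is: $v$ has a neighbor in $B \setminus \{c\}$. Indeed, if not, then every path in $G$ from $B \setminus \{c\}$ to the rest of $G$ would have to pass through $c$, making $c$ a cut vertex of $G$, contradicting $2$-connectivity. Pick any such $w \in N(v) \cap (B \setminus \{c\})$. I would then verify $G - \{v,w\}$ is connected by cases on $B$: if $B$ is a single edge $\{c,w\}$, then $G - v - w$ is simply $G - v$ with a leaf removed, hence connected; if $|V(B)| \geq 3$, then $B$ is $2$-connected so $B - w$ is connected and still contains $c$, which ties it to the rest of the block tree of $G - v$, so $G - v - w$ is connected.

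I do not expect any serious obstacle here; the lemma is essentially a structural observation. The only mildly delicate point is the initial reduction — one must rule out cut vertices of $G/vw$ other than $v^*$, which is where $2$-connectivity of $G$ and the $n \geq 4$ hypothesis (ensuring $G/vw$ is large enough for the notion of $2$-connectivity to be nontrivial) are used. Everything else is a straightforward block-tree argument.
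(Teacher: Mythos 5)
Your proof is correct. The paper itself states Lemma~\ref{con} without proof, citing~\cite{main}, and the argument given there is essentially the one you reconstruct: reduce $2$-connectivity of $G/vw$ to connectivity of $G-\{v,w\}$, then choose $w$ to be a neighbor of $v$ inside an endblock of $G-v$ (minus its cut vertex), which exists by $2$-connectivity of $G$.
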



For an edge $xy$ in a graph $H$, let $T_H(xy)$ denote the number of triangles containing $xy$.  Let
$T(H)=\min\{T_H(xy)\,:\,xy\in E(H)\}$. When we contract an edge $uv$ in a graph $H$, the degree of every
$x\in V(H)-u-v$ either does not change or decreases by $1$. Also the degree of $u*v$ in $H/uv$ is at least
$\max\{d_H(u),d_H(v)\}-1$. Thus
\begin{equation}\label{m25}
\mbox{$d_{H/uv}(w)\geq d_H(w)-1$ for any  $w\in V(H)$ and $uv\in E(H)$.
Also $d_{H/uv}(u*v)\geq d_H(u)-1$.}
\end{equation}
Similarly,
\begin{equation}\label{m251}
\mbox{$T(H/uv)\geq T(H)-1$ for every graph $H$ and $uv\in E(H)$.}
\end{equation}

We will use the following analog of Lemma 3.3 in~\cite{main}.

\begin{lem}\label{hhh}  Let $h$ be a positive integer. Suppose  a $2$-connected graph $G$  is obtained from a  $2$-connected graph $G'$ by contracting edge $xy$ into $x*y$
chosen using the following rules:\\
$(i)$ one of $x,y$, say $x$ is a vertex of the minimum degree in $G'$;\\
$(ii)$ $T_{G'}(xy)$ is the minimum among the edges $xu$ incident with $x$ such that $G'/xu$ is $2$-connected. 
 (Such edges exist by
Lemma~\ref{con}).
If $G$ has at least $h$ vertices of degree at most $h$, then either $G'=K_{h+2}$ or \\
$(a)$ $G'$ also has a vertex of degree at most $h$, and \\
$(b)$ $G'$ has at least $h+1$ vertices of degree at most $h+1$.
\end{lem}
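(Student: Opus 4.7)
The plan is to prove both (a) and (b) under the assumption $G' \neq K_{h+2}$, using the degree bookkeeping from (\ref{m25}): for every $w \in V(G) \setminus \{x*y\}$, $d_{G'}(w) \leq d_G(w) + 1$, with equality iff $w$ is a common neighbour of $x$ and $y$; and $d_G(x*y) = d_{G'}(x) + d_{G'}(y) - T_{G'}(xy) - 2$. Set $S := \{w \in V(G) : d_G(w) \leq h\}$; by hypothesis $|S| \geq h$.

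For (a), I would argue by contradiction: assume $\delta := \delta(G') \geq h+1$. Then for every $w \in S \setminus \{x*y\}$ the bookkeeping forces $d_{G'}(w) = h+1$ and $w \in N_{G'}(x) \cap N_{G'}(y)$. A short dichotomy on whether $x*y \in S$ (using $T_{G'}(xy) \leq d_{G'}(x)-1$ in the first subcase, and $T_{G'}(xy) \geq |S|$ together with the bound $\delta \leq d_{G'}(w) = h+1$ in the second) pins down $\delta = h+1$, $T_{G'}(xy) = h$, and $N_{G'}(x) = \{y, w_1, \ldots, w_h\}$ where each $w_i$ is a common neighbour of $x$ and $y$ of degree $h+1$ in $G'$ (with possibly one exception, whose degree is still $\geq h+1$, in the first subcase). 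The crucial next step is to exploit rule (ii). I first claim $G'/xw_i$ is $2$-connected for every $i$: any component $C'$ of $G' \setminus \{x, w_i\}$ not containing $y$ must lie inside $Z := V(G') \setminus \{x, y, w_1, \ldots, w_h\}$ (since $y$ is joined to every $w_j$ with $j \neq i$), and because $x$ has no neighbour in $Z$, $C'$ attaches to the rest of $G'$ only through $w_i$, making $w_i$ a cut-vertex of $G'$ -- a contradiction. Hence $w_i$ qualifies for rule (ii); combined with $T_{G'}(xw_i) \leq d_{G'}(x)-1 = h$ and the minimality of $T_{G'}(xy) = h$, this forces $T_{G'}(xw_i) = h$, so $w_i$ is adjacent to every vertex of $\{y, w_1, \ldots, w_h\} \setminus \{w_i\}$. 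Thus $G'[\{x, y, w_1, \ldots, w_h\}] = K_{h+2}$.

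To finish (a), I need $V(G') = \{x, y, w_1, \ldots, w_h\}$. The same cut-vertex reasoning works: each $w_i$ of degree $h+1$ has $N_{G'}(w_i) = \{x, y\} \cup \{w_j : j \neq i\}$ with no neighbour in $Z$, and $y$ has no neighbour in $Z$ unless $d_{G'}(y) \geq h+2$. In both subcases this leaves at most one vertex of $\{y, w_1, \ldots, w_h\}$ with a possible neighbour in $Z$; were $Z \neq \emptyset$, that vertex would be a cut-vertex of $G'$, again contradicting $2$-connectedness. So $G' = K_{h+2}$, against hypothesis. For (b), once $\delta(G') \leq h$ is available, I simply count vertices of $G'$ of degree $\leq h+1$: the set $S \setminus \{x*y\}$ contributes $\geq h-1$ of them, $x$ itself has degree $\delta \leq h$, and if $x*y \in S$ the formula for $d_G(x*y)$ combined with $T_{G'}(xy) \leq d_{G'}(x)-1$ gives $d_{G'}(y) \leq h+1$, adding $y$; these together furnish $\geq h+1$ distinct vertices. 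The main obstacle is the twin cut-vertex analysis -- first to conclude $G'/xw_i$ is $2$-connected so that rule (ii) can be applied to $w_i$, and then to rule out $Z \neq \emptyset$; both reductions hinge on the tight fact that $N_{G'}(x) = \{y, w_1, \ldots, w_h\}$ with every $w_i$ a common neighbour of $x$ and $y$, so the only candidates for an attachment to $Z$ are precisely those $w_i$'s.
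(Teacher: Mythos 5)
Your proposal is correct and follows essentially the same route as the paper's proof: the same dichotomy on whether the contracted vertex $x*y$ is among the low-degree vertices, the same derivation that all low-degree vertices of $G$ are common neighbours of $x$ and $y$ of degree $h+1$ in $G'$, the same cut-vertex argument showing $G'/xw_i$ is $2$-connected so that rule $(ii)$ forces $T_{G'}(xw_i)=T_{G'}(xy)=h$ and hence a clique $K_{h+2}$, and the same final step ruling out extra vertices via a cut vertex. The only (immaterial) difference is that the paper establishes $(b)$ first and unconditionally, while you derive it after $(a)$.
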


{\bf Proof.} Since $G$ is $2$-connected, $h\geq 2$. Let $V_{\leq s}(H)$ denote the set of vertices of degree at most $s$ in $H$.
Then by~\eqref{m25}, each $v\in V_{\leq h}(G)-x*y$  is also in $V_{\leq h+1}(G')$. Moreover, then by (i), 
\begin{equation}\label{m28}
x\in V_{\leq h+1}(G').
\end{equation}
Thus if $x*y\notin V_{\leq h}(G)$, then (b) follows. But if $x*y\in V_{\leq h}(G)$, then by~\eqref{m25}, also $y\in V_{\leq h+1}(G')$.
So, again (b) holds.

If $V_{\leq h-1}(G)\neq\emptyset$, then (a) holds by~\eqref{m25}.
So, if (a) does not hold, then 
\begin{equation}\label{eq1'}
\mbox{
each $v\in V_{\leq h}(G)-x*y$ has degree $h+1$ in $G'$ and is adjacent to both $x$ and $y$ in $G'$.}
\end{equation}

{\bf Case 1:} $|V_{\leq h}(G)-x*y|\geq h$. Then by~\eqref{m28},  $d_{G'}(x)=h+1$.
This in turn yields $N_{G'}(x)=V_{\leq h}(G)+y$.
Since $G'$ is $2$-connected, each
$v\in V_{\leq h}(G)-x*y$ is not a cut vertex.
Furthermore, $\{x,v\}$ is not a cut set. If it was,
because $y$ is a common neighbor of all neighbors of $x$,  all neighbors of $x$ must be in the same
component as $y$ in $G'-x-v$. It follows that
\begin{equation}\label{eq:8}
\mbox{
for every $v\in V_{\leq h}(G)-x*y$,  $G'/vx$ is $2$-connected.}
\end{equation}

If $uv\notin E(G)$ for some $u,v\in V_{\leq h}(G)$, then by~\eqref{eq:8} and (i), we would contract the edge $xu$
rather than $xy$. Thus $G'[V_{\leq h}(G)\cup \{x,y\}]=K_{h+2}$ and so
either $G'=K_{h+2}$ or $y$ is a cut vertex in $G'$,
as claimed.

{\bf Case 2:} $|V_{\leq h}(G)-x*y|=h-1$. Then $x*y\in V_{\leq h}(G)$. This means $d_{G'}(x)=d_{G'}(y)=h+1$
and $N_{G'}[x]=N_{G'}[y]$. So by~\eqref{eq1'}, there is $z\in V(G)$ such that $N_{G'}[x]=N_{G'}[y]=V_{\leq h}(G)\cup \{x,y,z\}$.
Again~\eqref{eq:8} holds (for the same reason that $N_{G'}[x]\subseteq N_{G'}[y]$).
Thus similarly $vu\in E(G')$ for every $v\in V_{\leq h}(G)-x*y$ and every $u\in V_{\leq h}(G)+z$. Hence
$G'[V_{\leq h}(G)\cup \{x,y,z\}]=K_{h+2}$ and  either $G'=K_{h+2}$ or $z$ is a cut vertex in $G'$,
as claimed.\qed

\subsection{A property of graphs in $\mathcal{F}(k) $}


A useful feature of graphs in  $\mathcal{F}(k) $ is the following.

\begin{lem}\label{co28}
Let $k \geq 9$ and $n\geq k$. Let $F$ be an $n$-vertex graph contained in $H_{n,k,t}$ with $e(F)>h(n,k,t-1)$.
 Then $F$ contains a graph in $\mathcal{F}(k)$.
 \end{lem}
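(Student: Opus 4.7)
The plan is to locate a large near-complete-bipartite subgraph of $F$, exploiting that in $H_{n,k,t}$ every vertex of $B \cup C$ is joined to every vertex of $A$, so $H_{n,k,t}[A,\, B \cup C] = K_{t,\, n-t}$. Fix the canonical partition $V(F) = A \cup B \cup C$ with $|A|=t$, $|B|=n-k+t$, $|C| = k-2t \in \{1,2\}$, and let $M = E(H_{n,k,t}) \setminus E(F)$. Since $h(n,k,t) - h(n,k,t-1)$ equals $n-t-3$ when $k = 2t+1$ and $n-t-5$ when $k = 2t+2$, the hypothesis $e(F) > h(n,k,t-1)$ yields $|M| \leq n-t-4$ (odd $k$) or $|M| \leq n-t-6$ (even $k$).

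For each $v \in B \cup C$, let $r_v = t - |N_F(v) \cap A|$ count the edges missing from $v$ to $A$ in $F$; then $\sum_v r_v \leq |M|$. Let $Z = \{v \in B \cup C : r_v = 0\}$. Since each $v \notin Z$ contributes at least one to the sum, $|Z| \geq (n-t) - |M|$, which is $\geq 4$ (odd $k$) and $\geq 6$ (even $k$). I will pick $B^* \subseteq B \cup C$ with $|B^*| = t+1$ (odd) or $t+2$ (even), such that $\sum_{v \in B^*} r_v \leq t-3$ (resp.\ $t-4$); then the bipartite subgraph $F[A, B^*]$ lacks at most $t-3$ (resp.\ $t-4$) edges from $K_{t,|B^*|}$, making it a member of $\mathcal F_0 \subseteq \mathcal F(k)$ (resp.\ $\mathcal F_1 \subseteq \mathcal F(k)$). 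If $|Z| \geq |B^*|$, pick $B^* \subseteq Z$ outright; else take all of $Z$ and complete $B^*$ using the $s := |B^*| - |Z|$ vertices of $(B \cup C) \setminus Z$ of smallest $r_v$.

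The key step is to bound the sum of these $s$ smallest values. By the standard prefix-sum averaging inequality, it is at most $\frac{s \cdot |M|}{(n-t) - |Z|}$. Verifying that this is at most $t-3$ (odd) or $t-4$ (even) reduces, after clearing denominators and substituting the bound on $|M|$, to $(t+1-|Z|)(n-t-4) \leq (t-3)(n-t-|Z|)$ in the odd case and the analogous inequality $(t+2-|Z|)(n-t-6) \leq (t-4)(n-t-|Z|)$ in the even case. Writing $|Z| = 4+\Delta$ (resp.\ $6+\Delta$), the gap between the two sides simplifies to $\Delta(n-k) \geq 0$, so the required inequality holds. The main obstacle is the tightness of this averaging bound at $\Delta = 0$ and $n=k$; however, at the boundary $n = k$ we have $|M| \leq t-3$ (or $t-4$) already, so $\sum_{v \in B \cup C} r_v$ is itself below the threshold and any $B^*$ of the right size works. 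This tightness also explains why the specific constants appearing in the definitions of $\mathcal F_0$ and $\mathcal F_1$ are exactly the right ones.
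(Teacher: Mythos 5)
Your proof is correct, but it takes a genuinely different route from the paper's. The paper proves the lemma by induction on $n$ through a minimum-degree dichotomy: if $\delta(F)\geq t$, then every vertex of $B$ is forced to be complete to $A$ (and, for even $k$, each vertex of $C$ can miss at most one $A$-neighbour because it may instead be adjacent to the other $C$-vertex), so a large near-complete bipartite graph appears at once; otherwise some $v\in V(F)-A$ has $d_F(v)\leq t-1$, and deleting it loses at most $t-1$ edges, so $F-v\subseteq H_{n-1,k,t}$ still satisfies the hypothesis and the induction closes. That route forces a special subcase at $t=4$, $k=10$, $n=2t+3$, where the subgraph produced lies in $\mathcal{F}_3\cup\mathcal{F}_4$ rather than $\mathcal{F}_1$. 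Your argument is instead a direct, non-inductive count of the missing edges combined with a prefix-averaging choice of $B^*$. I checked the key inequality: with $|Z|=4+\Delta$ (resp.\ $6+\Delta$) the two sides indeed differ by exactly $\Delta(n-k)\geq 0$; and in the delicate even case $t=4$, where the threshold is $t-4=0$, your bound $|Z|\geq (n-t)-(n-t-6)=6=|B^*|$ always permits $B^*\subseteq Z$, so no averaging is even needed there. What your approach buys is uniformity: it avoids induction entirely and always produces a member of $\mathcal{F}_0$ (odd $k$) or $\mathcal{F}_1$ (even $k$), never resorting to the exceptional families $\mathcal{F}_3,\mathcal{F}_4$ that the paper's degree-based argument falls back on --- a marginally stronger conclusion than the lemma requires, obtained at comparable length.
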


\begin{proof}
Assume the sets $A,B,C$ to be as in the definition of $H_{n,k,t}$. We will use induction on $n$.

{\em Case 1:} $k=2t+1$.  If $n=k$, then  $F\in\mathcal{K}^{-t+3}(H_{k,k,t})$ because
$h(k,k,t)-h(k,k,t-1)-1=t-3$.
 Thus, since $H_{k,k,t}\supseteq F_0(t)$, $F$ contains a subgraph in $\mathcal{F}_0$.
Suppose now the lemma holds for all $k\leq n'<n$. If $\delta(F)\geq t$, then
each $v\in V(F)-A$ is adjacent to every $u\in A$.
 Hence $F$ contains $K_{t,n-t}$. If $\delta(F)<t$, then since $A$ is dominating and $n>2t$, there is $v\in V(F)-A$
 with $d_F(v)\leq t-1$. Then $F-v\subseteq H_{n-1,k,t}$, and we are done by induction.

{\em Case 2:}  $k=2t+2$. Let $C=\{c_1,c_2\}$. If $n=k$ then as in Case 1,
 $$e(H_{k,k,t})-e(F) \leq h(k,k,t)-h(k,k,t-1)-1=t - 4,$$
  i.e.,  $F\in\mathcal{K}^{-t+4}(H_{k,k,t})$.
Since $F_1(t)\subseteq  H_{k,k,t}$, $F$ contains a subgraph in $\mathcal{F}_1$.
Suppose now the lemma holds for all $k\leq n'<n$.
If $\delta(F)<t$, then there is $v\in V(F) - A$
 with $d_F(v)\leq t-1$. Then $F-v\subseteq H_{n-1,k,t}$, and we are done by induction.

Finally, suppose $\delta(F) \geq t$. So, each $v\in B$ is adjacent to every $u\in A$ and each of $c_1,c_2$ has
at least $t-1$ neighbors in $A$. Since $|B \cup \{c_1\}|\geq n-t-1\geq t+2$, $F$ contains a member of $\mathcal{K}^{-1}(F_1(t))$.
Thus $F$ contains a member of $\mathcal{F}_1$ unless $t=4$, $n=2t+3$ and $c_1$ has a nonneighbor $x\in A$. But then $c_1c_2\in E(F)$,
and so $F$ contains either $F_3(4)$ or $F'_4$.
\end{proof}

\section{Proof of Theorem~\ref{main}$'$}\label{mproof}

\subsection{Contraction procedure} 

If $n > k$, we iteratively construct  a sequence of graphs $G_n, G_{n-1}, ... G_{m}$ where $|V(G_j)| = j$. 
In~\cite{main}, the following {\bf Basic Procedure} (BP) was used:

At the beginning of each round, for some $j : k\leq j\leq n$, we have a $j$-vertex $2$-connected  graph $G_j$ with $e(G_j)> h(j,k,t-1)$. 
\begin{center}
\begin{tabular}{lp{5in}}
(BP1) & If $j=k$, then we stop. \\
(BP2) &  If there is an edge $uv$ with $T_{G_j}(uv)\leq t-2$ such that $G_j/uv$ is $2$-connected, choose one such edge so that\\
& $(i)$ $T_{G_j}(uv)$ is minimum, and subject to this \\
& $(ii)$ $uv$ is incident to a vertex of minimum possible degree.\\
& Then obtain $G_{j-1}$ by contracting $uv$. \\
(BP3) & If (BP2) does not hold, $j \geq k + t - 1$ and there is $xy \in E(G_j)$ such that $G_j - x - y$ has at least $3$ components and one of the components, say $H_1$ is a $K_{t-1}$,
then let $G_{j-t+1}=G_j-V(H_1)$. \\
(BP4) & If neither (BP2) nor (BP3) occurs, then we stop.
\end{tabular}
\end{center}

{\bf Remark 5.1.} By definition, (BP3) applies only when $j \geq k + t - 1$. As observed in~\cite{main}, if $j\leq 3t-2$, then a $j$-vertex graph $G_j$ 
with a $2$-vertex set $\{x,y\}$ separating the graph into at least $3$ components cannot have $T_{G_j}(uv)\geq t-1$ for every edge $uv$. It also
was calculated there that if $3t-1\leq j\leq 3t$, then any $j$-vertex graph $G'$ with such $2$-vertex set $\{x,y\}$ and $T_{G'}(uv)\geq t-1$ for every edge $uv$
has at most $h(j,k,t-1)$ edges and so cannot be $G_j$.

In this paper, we also use a quite similar {\bf Modified Basic Procedure} (MBP): start with a $2$-connected, $n$-vertex graph $G=G_n$ with $e(G) > h(n,k,t-1)$ and $c(G) < k$. Then
\begin{center}
\begin{tabular}{lp{5in}}
(MBP0) & if $\delta(G_j) \geq t$, then  apply the rules (BP1)--(BP4) of  (BP) given above;\\
(MBP1) & if $\delta(G_j) \leq t-1$ and $j = k$, then stop;\\
(MBP2) & otherwise, pick a vertex $v$ of smallest degree,
  contract an edge $vu$  with the \\ & minimum $T_{G_j}(vu)$ among the edges $vu$  such that $G_j/vu$ is $2$-connected, and 
\\& set $G_{j-1} = G_j/uv$.
\end{tabular}
\end{center}

\subsection{ Proof of Theorem~\ref{main}$'$ for the case  $n \leq k + (t-1)/2$}
Apply to $G$ the Modified Basic Procedure (MBP) starting from $G_n=G$.
By Remark~1, (BP3) was never applied, since $ k + (t-1)/2<k+t-1$.
 Therefore at every step, we only contracted an edge. 
Denote by $G_m$ the terminating graph of (MBP). Then $G_j$ is $2$-connected and $c(G_j) \leq c(G) < k$
for each $m\leq j\leq n$.
By construction, after each contraction, 
we lose at most $t-1$ edges. It follows that $e(G_m) > h(m, k, t-1)$.


If $m>k$, then the same argument as in~\cite{main} gives us the following structural result:

\begin{lem}[\cite{main}]\label{oldmain4}Let $m > k \geq 9$ and $n \geq k$.
\vspace{-3mm}
\begin{itemize}
\item If $k \neq 10$, then $G_m \subseteq H_{m,k,t}$.
\item If $k=10$, then $G_m \subseteq H_{m,k,t}$ or $G_m \supseteq F_4$. \qed
\end{itemize}
\end{lem}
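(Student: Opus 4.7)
The plan is to exploit the strong combinatorial conditions that (MBP) forces on its terminal graph $G_m$, and then run a Kopylov-style longest-path analysis to pin down the structure.

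First I unpack the stopping condition. Since $n \leq k + (t-1)/2$, every graph in the contraction sequence has fewer than $k + t - 1$ vertices, so by Remark~5.1 rule (BP3) is never available. The hypothesis $m > k$ excludes termination under (BP1) or (MBP1), so (MBP) must have stopped at (BP4). This yields two crucial properties: (i) $\delta(G_m) \geq t$, because otherwise (MBP2) would have fired, and (ii) every edge $uv$ for which $G_m/uv$ is still $2$-connected satisfies $T_{G_m}(uv) \geq t-1$. By construction of (MBP), $G_m$ is $2$-connected, $c(G_m) < k$, and $e(G_m) > h(m, k, t-1)$, since one loses at most $t-1$ edges per contraction.

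Next I want to find a set $A$ of $t$ vertices spanning a $K_t$ in $G_m$ and dominating $V(G_m) \setminus A$; this would embed $G_m \subseteq H_{m,k,t}$. Starting from a minimum-degree vertex $v$, Lemma~\ref{con} produces a neighbor $w$ with $G_m/vw$ being $2$-connected, so by (ii) one has $|N(v) \cap N(w)| \geq t-1$. Iterating this across ``contractable'' edges within the resulting candidate hub, together with the edge-count lower bound, forces a $t$-set $A$ with $G_m[A] = K_t$. The cycle constraint $c(G_m) < k$ then enters via Theorem~\ref{le:kop}: for a longest path $P$ from $x$ to $y$ in $G_m$ one has $c(G_m) \geq \min\{|V(P)|,\, d(x,P) + d(y,P)\}$, and since all neighbors of $x$ and $y$ lie on $P$, this inequality combined with $\delta(G_m) \geq t$ sharply restricts what can occur outside $A$. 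Any edge inside $V(G_m) \setminus A$, or any non-$A$ vertex with enough neighbors outside $A$, can be threaded through the $K_t$ on $A$ to build a cycle of length at least $k$, a contradiction. Thus $V(G_m) \setminus A$ is essentially independent and fully joined to $A$, modulo the $k - 2t$ ``extra'' vertices playing the role of $C$, and the containment $G_m \subseteq H_{m,k,t}$ drops out.

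The main obstacle I expect is the borderline case $k = 10$ (so $t = 4$), where the triangle threshold $t-1 = 3$ is too weak to rule out a $3$-vertex hub set $A$ with a matching sitting among the non-$A$ vertices, which is exactly the configuration $F_4$. To dispose of all other near-misses in that case, I would apply Lemma~\ref{hhh} to propagate small-degree vertices backwards through the contraction sequence, bounding how many vertices of degree $t-1$ or $t$ that $G_m$ can carry relative to the way the edges $uv$ were selected; together with Chv\'atal's degree-sequence criterion (Theorem~\ref{t1}) applied to $G_m$ and its low-degree neighborhoods, this narrows the non-$H_{m,k,t}$ survivors down to $F_4$ itself, giving the alternative conclusion $G_m \supseteq F_4$.
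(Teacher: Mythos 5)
The paper does not actually reprove this lemma: it is imported from~\cite{main} with the remark that ``the same argument as in~\cite{main}'' applies, so the benchmark is whether your sketch could stand in for that argument. Your unpacking of the stopping condition is correct and is exactly the right starting point: since $n\leq k+(t-1)/2<k+t-1$, (BP3) never fires, and termination with $m>k$ forces $\delta(G_m)\geq t$ and $T_{G_m}(uv)\geq t-1$ for every edge $uv$ with $G_m/uv$ $2$-connected, while $e(G_m)>h(m,k,t-1)$ and $c(G_m)<k$.

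From that point on, however, the proposal is a plan rather than a proof, and one of its load-bearing steps fails as stated. Applying Theorem~\ref{le:kop} to a longest path whose endpoints have degree at least $t$ gives only $c(G_m)\geq\min\{\ell,\,2t\}$, and $2t$ equals $k-1$ (for $k=2t+1$) or $k-2$ (for $k=2t+2$), so no contradiction with $c(G_m)<k$ results; this is precisely why the paper's own disintegration argument in Subsection~5.2 works with the $t$-core, whose minimum degree is at least $t+1$, rather than with $\delta\geq t$. Consequently the claim that the cycle constraint ``sharply restricts what can occur outside $A$'' is not justified, and neither is the preceding step ``iterating \dots forces a $t$-set $A$ with $G_m[A]=K_t$'': knowing $|N(v)\cap N(w)|\geq t-1$ for contractible edges does not yield a clique without substantial further work, and this is exactly where the argument of~\cite{main} spends most of its effort, combining the triangle condition with the edge bound $e(G_m)>h(m,k,t-1)$. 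The $k=10$ branch is likewise only named, not argued: one must exhibit the precise near-miss configurations and show that the only survivor of the edge count and the cycle bound is $F_4$; invoking Lemma~\ref{hhh} there is also off target, since that lemma tracks degrees backwards through (MBP2)-contractions, whereas the structure of the terminal graph must be read off from the (BP4) stopping condition alone. So the skeleton is the right one, but the proof is not there.
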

If $k=10$ and $G_m \supseteq F_4$, then $G_m$ contains a subgraph in $\mathcal F(k)$. 
Otherwise, by Lemma~\ref{co28},  again $G_m$ has a subgraph in $\mathcal F(k)$. 
Next, undo the contractions that were used in (MBP). At every step for $m \leq j \leq n$, by Lemma~\ref{oldmain2}, $G_j$ contains some subgraph 
$H' \in \mathcal F(k)$. In particular, $G = G_n$ contains such a subgraph. Thus by Lemma~\ref{oldmain3}, we get our result.
So, below we assume
\begin{equation}\label{m=k}
 m = k.
\end{equation}

Since $c(G_k) < k$, $G_k$ does not have a hamiltonian cycle. Denote the vertex degrees of $G_k$ $d_1 \leq d_2 \leq ... \leq d_k$. 
By Theorem~\ref{t1}, there exists some $2\leq i \leq t$ such that $d_i \leq i$ and $d_{k-i} < k-i$. Let $r = r(G_k)$
 be the smallest such $i$. 

Because $G_k$ has  $r$ vertices of degree at most $r$, similarly to~\cite{Erd62},
 \[
	e(G_k) \leq r^2 + {k-r \choose 2}.
\] 
For $k=2t+1$, $r^2 + {k-r \choose 2} > h(n,k,t-1)$ only when $ r = t$ or $r < (t+4)/3$, and for $k=2t+2$, 
when $r = t$ or $r<(t+6)/3$. If $r = t$, then repeating the argument  in~\cite{main} yields:

\begin{lem}[\cite{main}]\label{rt} If $r(G_k) = t$ then $G_k \subseteq H_{k,k,t}$. 
\end{lem}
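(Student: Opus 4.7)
My approach is to sharpen the Erd\H{o}s-type edge count $e(G_k) \leq r^2 + \binom{k-r}{2} = t^2 + \binom{k-t}{2} = h(k,k,t)$ recorded just before Lemma~\ref{rt} into a structural statement. Let $W := \{v \in V(G_k) : d_{G_k}(v) \leq t\}$. The condition $d_t \leq t$ gives $|W| \geq t$, and for $k = 2t+1$ the stronger inequality $d_{t+1} \leq t$ (coming from $d_{k-t} < k-t$) gives $|W| \geq t+1$; for $k = 2t+2$ we additionally have at least $t+2$ vertices of degree at most $t+1$, supplying the candidate $2$-set that plays the role of $C$ in $H_{k,k,t}$. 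Since $e(G_k) > h(k,k,t-1)$ and $h(k,k,t) - h(k,k,t-1) \leq t-2$, the graph $G_k$ must be within a narrow edge-defect window of the extremal graph $H_{k,k,t}$.

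The plan is to establish, by contradiction, three structural properties: (i) $W$ is independent; (ii) the set $U := V(G_k) \setminus W$ spans a graph that is almost complete (missing at most the edge-count defect $h(k,k,t) - e(G_k)$ edges); and (iii) there exists a $t$-subset $A \subseteq U$ such that $N_{G_k}(w) \subseteq A$ for every $w \in W$. Given (i)--(iii), taking $B$ to be any $t$-subset of $W$ (absorbing the rest of $W$ into $C$) and $C := V(G_k) \setminus (A \cup B)$ exhibits $G_k$ as a subgraph of $H_{k,k,t}$ directly from the definition. To prove (i) and (iii), I would assume a violating edge $uv$ (either with $u,v \in W$, or with $u \in W$ and $v \in U \setminus A$) and build a cycle of length at least $k$ using Theorem~\ref{le:kop}: pick a long $xy$-path $P$ through the near-clique $U$ with endpoints chosen to witness the offending edge; the near-tight edge count forces the ``internal'' degrees $d(x,P) + d(y,P)$ into $P$ to reach at least $k$, contradicting $c(G_k) < k$.

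The main obstacle will be property (iii) in the case $k = 2t+2$, where $|U|$ can be as large as $t+2$ and there is genuine flexibility in choosing $A$ inside $U$. Here I would use the minimality of $r(G_k) = t$ applied at Chv\'atal indices $i = t-1$ and $i = t-2$ via Theorem~\ref{t1} to pin down the two ``extra'' vertices of $U$ (those of degree $t+1$) and show that they receive no edges from $W$, so the common neighborhood of $W$ is forced into a single $t$-set $A$. For $k = 2t+1$ this obstacle evaporates, since $|W| \geq t+1$ already forces $|U| \leq t$ and hence $A = U$ automatically. Once (i)--(iii) are in hand, the conclusion $G_k \subseteq H_{k,k,t}$ is a direct matching against the definition of $H_{n,k,a}$ recorded in the introduction.
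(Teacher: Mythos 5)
Your high-level plan (isolate the set $W$ of vertices of degree at most $t$, show it is independent with neighborhood confined to a $t$-set, and match against the definition of $H_{k,k,t}$) has the right shape, but the engine you propose for the contradiction steps does not run. Theorem~\ref{le:kop} yields $c(G_k)\geq\min\{\ell,d(x,P)+d(y,P)\}$, so to contradict $c(G_k)<k$ you need \emph{both} $d(x,P)+d(y,P)\geq k$ \emph{and} $\ell\geq k$; since $|V(G_k)|=k$, the latter means $P$ is a Hamiltonian path of $G_k$. Your proposal gives no mechanism for producing a Hamiltonian path between two prescribed high-degree endpoints ``witnessing the offending edge,'' and that is precisely the crux of the whole lemma. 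The argument of~\cite{main} that the paper invokes (and that the paper itself reuses in Section~5.2 for the small-$r$ case) gets this for free by first passing to the $k$-closure $G^*$, in which every non-adjacent pair is by construction joined by a path on at least $k$ vertices, and then taking a longest path with both endpoints in a high-degree core so that all core-neighbors of the endpoints lie on $P$. Without the closure, the phrase ``the near-tight edge count forces $d(x,P)+d(y,P)$ to reach at least $k$'' has no path $P$ to apply to. A second concrete problem: for property (i), an offending edge has both endpoints in $W$, each of degree at most $t$, so their degree sum is at most $2t<k$ and Kopylov's inequality with those endpoints is vacuous; the contradiction must be routed through two high-degree vertices, which you do not arrange.

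Property (iii) is also not in reachable form as written. For $k=2t+2$ the set $A$ is the object to be \emph{constructed}, so ``assume a violating edge $u\in W$, $v\in U\setminus A$'' is circular, and the minimality of $r$ at $i=t-1,t-2$ only gives disjunctions (either $d_i\geq i+1$ or $d_{k-i}\geq k-i$) that do not pin down the two degree-$(t+1)$ vertices. What is actually needed is a rerouting argument of the kind at the end of the paper's Section~5.2: if two low-degree vertices attach to different pairs inside the clique on $U$, one threads a spanning cycle of the clique through both attachment pairs and absorbs both vertices, producing a cycle of length at least $k$. I would also flag that (i) in your strong form (``all of $W$ is independent'') is not a formal consequence of $G_k\subseteq H_{k,k,t}$ alone; it only becomes forced once you combine it with the edge-count window, so you must use $e(G_k)>h(k,k,t-1)$ at that point rather than treat independence as self-evidently the target structure. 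In short: correct skeleton, but the closure step and the two cycle-routing arguments that make the skeleton work are missing.
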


Then by Lemma~\ref{co28}, 
Lemma~\ref{oldmain2}, and Lemma~\ref{oldmain3}, $G\subseteq H_{n,k,t}$ and contains some subgraph in $\mathcal F(k)$. 
So we may assume that 
\begin{equation}\label{rt3}
\mbox{if $k=2t+1$ then $r < (t+4)/3$, and if $k=2t+2$ then $r < (t+6)/3$.} 
\end{equation}

Our next goal is to show that $G$ contains a large ``core'', i.e.,  a subgraph with large minimum degree. 
For this, we recall the notion of \emph{disintegration} used by Kopylov$~\cite{Kopy}$.

{\bf Definition}: For a natural number $\alpha$ and a graph $G$, the \emph{$\alpha$-disintegration} of 
a graph $G$ is the  process of iteratively removing from $G$ the vertices with degree 
at most $\alpha$  until the resulting graph has minimum degree at least $\alpha + 1$. 
This resulting subgraph $H = H(G, \alpha)$ will be called the $\alpha$-{\em core} of $G$. It is well known
that $H(G, \alpha)$ is unique and does not depend on the order of vertex deletion. 

\begin{claim}\label{dis} The $t$-core $H(G,t)$ of $G$ is not empty.
\end{claim}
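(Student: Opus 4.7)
The plan is to argue by contradiction: suppose $H(G_k,t)=\emptyset$, so $G_k$ is $t$-degenerate. I will construct a convenient degeneracy ordering of $V(G_k)$ whose associated edge bound contradicts $e(G_k)>h(k,k,t-1)$.

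First, by the definition of $r=r(G_k)$ we have $d_r\leq r$, so there exist $r$ vertices $v_1,\ldots,v_r$ of $G_k$ with $d_{G_k}(v_i)\leq r$. By \eqref{rt3}, $r<(t+4)/3\leq t$ (or the analogous bound in the even case), so these $r$ vertices are valid first positions in a $t$-degeneracy ordering. I would extend to a full ordering $v_1,\ldots,v_k$ by letting $v_{r+1},\ldots,v_k$ be a $t$-degeneracy ordering of the subgraph $G_k-\{v_1,\ldots,v_r\}$; this exists because every subgraph of a $t$-degenerate graph is itself $t$-degenerate.

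Second, I would upper-bound $e(G_k)$ by summing the forward degrees $|N_{G_k}(v_i)\cap\{v_{i+1},\ldots,v_k\}|$ in this ordering. For $i\leq r$, the forward degree is at most $d_{G_k}(v_i)\leq r$, contributing at most $r^2$ in total; for $i>r$, it is at most $\min(t,k-i)$, contributing at most $\binom{t+1}{2}+t(k-r-t-1)$. Hence
\[
e(G_k)\;\leq\; r^2+\binom{t+1}{2}+t(k-r-t-1).
\]

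Finally, substituting $k\in\{2t+1,2t+2\}$ and each admissible value of $r$ (small by \eqref{rt3}) and simplifying, I would verify that the right-hand side is strictly less than $h(k,k,t-1)=\binom{k-t+1}{2}+(t-1)^2$ whenever $t\geq 4$. For instance, when $k=2t+1$ and $r=2$, the bound reduces to $(3t^2-3t+8)/2$ versus $h=(3t^2-t+4)/2$, giving a positive gap of $t-2\geq 2$; the cases $r\in\{3,4\}$ and $k=2t+2$ are handled by analogous one-line computations. The main obstacle is purely the numerical verification in this last step, but it is routine since \eqref{rt3} constrains $r$ to a handful of small integers; no structural subtleties arise beyond invoking hereditary $t$-degeneracy and the consequence of Chv\'atal's theorem that guarantees $r$ low-degree vertices.
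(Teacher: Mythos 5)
Your argument has a genuine gap: it proves that the $t$-core of $G_k$ is nonempty, but the claim (and the way it is used immediately afterwards, for the $k$-closure $G^*$ of $G$) concerns the $t$-core of the \emph{original} $n$-vertex graph $G=G_n$, and in this subsection $n$ may be as large as $k+(t-1)/2$, so $G\neq G_k$ in general. Since $G_k$ is obtained from $G$ by a sequence of contractions, it is a minor of $G$, not a subgraph, and nonemptiness of a core is not inherited backwards along contractions: a contraction can create a nonempty $t$-core where none existed (e.g., a subdivided clique is $2$-degenerate but contracts to the clique). So even granting your computation, the conclusion $H(G_k,t)\neq\emptyset$ does not yield $H(G,t)\neq\emptyset$. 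The paper's proof is built precisely to avoid this: it constructs a partial $t$-degeneracy ordering of $G$ \emph{itself} by deleting, one at a time, the preimages $v_n,v_{n-1},\dots,v_{k+1}$ of the vertices that (MBP2) contracted, using the facts that $G-v_n-\dots-v_{j+1}\subseteq G_j$ and that Lemma~\ref{hhh} together with \eqref{rt3} forces $\delta(G_j)\leq j-k+r-1\leq t-1$; only after reaching $k$ vertices does it delete two low-degree vertices and run $t$-disintegration, and the resulting edge count is compared with $h(n,k,t-1)$, not $h(k,k,t-1)$. None of that bookkeeping appears in your proposal, and it is the essential content of the claim for $n>k$.

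For the subcase $n=k$ (where $G=G_k$) your argument is correct and in fact slightly sharper than the paper's: you charge forward degree at most $r$ to all $r$ low-degree vertices guaranteed by Chv\'atal's theorem, obtaining $e(G_k)\leq r^2+t(k-t-r)+\binom{t}{2}$, whereas the paper uses only two of them; your stated deficit $t-2$ for $k=2t+1$, $r=2$ checks out, and the general deficit $2-t+r(t-r)$ (resp.\ $4-t+r(t-r)$ for $k=2t+2$) is indeed positive for all $r$ permitted by \eqref{rt3}. But as written the proposal establishes the claim only when $n=k$.
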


{\bf Proof of Claim~\ref{dis}}: 
We may assume that for all $m\leq j<n$, graph $G_j$ was obtained from $G_{j+1}$ by contracting edge $x_jy_j$, where
$d_{G_{j+1}}(x_j)\leq d_{G_{j+1}}(y_j)$. By Rule (MBP2), 
$d_{G_{j+1}}(x_j)=\delta({G_{j+1}})$, provided that $\delta({G_{j+1}})\leq t-1$.

By definition, $|V_{\leq r}(G_k)|\geq r$. So by Lemma~\ref{hhh} (applied several times), for each $k+1\leq j\leq k+t-r$, because each $G_j$ is not a complete graph (otherwise it would have a hamiltonian cycle),
\begin{equation}\label{m281}
\delta(G_j)\leq j-k+r-1\;\mbox{ and} \; |V_{\leq j-k+r}(G_j)|\geq j-k+r.
\end{equation}
To show that 
\begin{equation}\label{m282}
\delta(G_j)\leq t-1\;\mbox{for all}\; k\leq j\leq n,
\end{equation}
 by~\eqref{m281} and~\eqref{rt3}, it is enough to observe that
$$\delta(G_j)\leq j-k+r-1\leq (n-k)+r-1\leq \frac{t-1}{2}+\frac{t+6}{3} -1 =\frac{5t+3}{6}< t.
$$

We will apply a version of $t$-disintegration in which we first  manually remove a sequence of vertices and count the number of edges they cover.
By~\eqref{m282} and (MBP2), $d_{G_{n}}(x_{n-1})=\delta({G_{n}})\leq n-k+r-1$. Let
 $v_{n}:=x_{n-1}$. 
 Then  $G - v_{n}$  is a subgraph of $G_{n-1}$. If $x_{n-2}\neq x_{n-1}*y_{n-1}$ in $G_{n-1}$, then
 let $v_{n-1}:=x_{n-2}$, otherwise let $v_{n-1}:=y_{n-1}$. In both cases, $d_{G-v_n}(v_{n-1})\leq n-k+r-2$.
We continue in this way until $j=k$: each time we delete from 
  $G - v_{n}-\ldots-v_{j+1}$ the unique survived vertex $v_j$ that was in the preimage of $x_{j-1}$ when we obtained $G_{j-1}$ from $G_j$.
  Graph $G - v_{n} - ... - v_{k+1}$ has $r \geq 2$ vertices of degree at most $r$. 
We additionally delete 2 such vertices $v_{k}$ and $v_{k-1}$. Altogether, we have lost at most $(r+n-k-1) + (r+n-k-2) + ...+ r + 2r $ 
edges in the deletions. 

Finally, apply $t$-disintegration to the remaining graph on $k-2 \in \{2t-1,2t\}$ vertices. Suppose that the resulting graph is empty.

{\bf Case 1:} $n = k$. Then \[e(G) \leq r+r + t(2t-1 -t) + {t \choose 2},\]
where $r+r$ edges are from $v_k$ and $v_{k-1}$, and after deleting $v_k$ and $v_{k-1}$, every vertex deleted removes at most $t$ edges, until we reach the final $t$ vertices which altogether span at most ${t \choose 2}$ edges. 

For $k=2t+1$, 
\[h(k,k,t-1) - e(G) \geq {2t+1-(t-1) \choose 2} + (t-1)^2 - \left[r+r + t(2t-1 -t) + {t \choose 2}\right] = t+2 - 2r,\]
which is nonnegative for $r < (t+3)/3 $. Therefore $e(G) 
 \leq h(k,k,t-1)$, a contradiction.

Similarly, if $k= 2t+2$, \[e(G) \leq r+r + t(2t -t) + {t \choose 2}, \text{and}\] 
\[h(k,k,t-1)  - e(G) \geq {2t+2-(t-1) \choose 2} + (t-1)^2 - [r+r + t(2t -t) + {t \choose 2}] = t+4 - 2r,\] which is nonnegative when $r<(t+6)/3$.

{\bf Case 2:} $k < n \leq k+(t-1)/2$. Then for $k=2t+1$, 
\begin{align*}
e(G) & \leq \big[(r+n-k-1) + (r+n-k-2) + \ldots + r\big] + 2r+ t(2t -1 -t) + {t \choose 2} \\
& \leq \big[(t-1) + (t-1) + \ldots + (t-1)\big] + h(k,k,t-1)\\
& = (t-1)(n-k) + h(k,k,t-1)\\
&= h(n,k,t-1),
\end{align*}
where the last inequality holds because $r+n-k-1\leq t-1$.

Similarly, for $k = 2t+2$, 
\begin{align*}
e(G) & \leq \big[(r+n-k-1) + (r+n-k-2) + \ldots + r\big] + 2r+ t(2t -t) + {t \choose 2}\\
& \leq (n-k)(t-1) + h(k,k,t-1)\\
& = h(n,k,t-1).
\end{align*} 
This contradiction completes the proof of Claim~\ref{dis}.
\qed

For the rest of the proof of Theorem~\ref{main}$'$, we will follow the method of Kopylov in~\cite{Kopy} to
show that $G \subseteq H_{n,k,2}$. Let $G^*$ be the $k$-closure of $G$.
 That is, add edges to $G$ until adding any additional edges creates a cycle of length at least $k$. 
In particular, for any non-edge $xy$ of $G^*$, there is an $(x,y)$-path in $G^*$ with at least $k-1$ edges.

Because $G$ has a nonempty $t$-core, and $G^*$ contains $G$ as a subgraph, $G^*$  also has a nonempty $t$-core 
(which  contains the $t$-core of $G$). Let $H = H(G^*,t)$ denote the $t$-core of $G^*$. We will show that
\begin{equation}
\mbox{\em
$H$ is a complete graph.}
\end{equation}
Indeed, suppose there exists a nonedge in $H$. Choose a longest path $P$ of $G^*$ whose terminal vertices $x \in V(H)$ and $y \in V(H)$ are nonadjacent. 
By the maximality of $P$, every neighbor of $x$ in $H$ is in $P$, similar for $y$. Hence $d_P(x) + d_P(y) = d_H(x) + d_H(y) \geq 2(t+1) \geq k$, and also $|P| = k-1$ (edges). 
By Kopylov' Theorem~\ref{le:kop}, $G^*$ must have a cycle of length at least $k$, a contradiction. 

Therefore $H$ is a complete subgraph of $G^*$. Let $\ell = |V(H)|$. Because every vertex in $H$ has degree at least $t+1$, $\ell \geq t+2$. 
Furthermore, if $\ell \geq k-1$, then $G^*$ has a clique $K$ of size at least $k-1$. Because $G^*$ is $2$-connected, we can extend a $(k-1)$-cycle 
of $K$ to include at least one vertex in $G^*-H'$, giving us a cycle of length at least $k$. It follows that 
\begin{equation}\label{ell}
\mbox{$t+2 \leq \ell \leq k-2$,}
\end{equation}
and therefore $k-\ell \leq t$. Apply a weaker $(k-\ell)$-disintegration to $G^*$, and denote by $H'$ the resulting graph. 
By construction, $H \subseteq H'$. 

\emph{Case 1}: There exists $v\in V(H')-V(H)$. Since $v\notin V(H)$, there exists a nonedge between a vertex in $H$ and a vertex in $H' - H$. Pick a longest path $P$ with  terminal vertices $x \in V(H')$ 
and $y \in V(H)$. Then $d_P(x) + d_P(y) \geq (k-\ell + 1) + (\ell - 1) = k$, and therefore $c(G^*) \geq k$.  

\emph{Case 2}: $H = H'$. Then \[e(G^*) \leq {\ell \choose 2} + (n - \ell)(k - \ell) = h(n, k, k-\ell).\] If $3\leq (k - \ell) \leq t-1$, 
then $e(G) \leq \max\{h(n,k,3), h(n, k, t-1)\}$, so by~\eqref{ell}, $k-\ell =2$, and $H$ is the complete graph with $k-2$ vertices. 
Let $D = V(G^*) - V(H)$. If there is an edge $xy$ in $G^*[D]$, then because $G^*$ is $2$-connected, there exist two vertex-disjoint paths, $P_1$ and $P_2$, 
from $\{x,y\}$ to $H$ such that $P_1$ and $P_2$ only intersect $\{x,y\} \cup H$ at the beginning and end of the paths. 
Let $a$ and $b$ be the terminal vertices of $P_1$ and $P_2$ respectively that lie in $H$. Let $P$ be any $(a,b)$-hamiltonian path of $H$. Then $P_1 \cup P \cup P_2 + xy$ is a cycle of length at least $k$ in $G^*$, a contradiction.

Therefore $D$ is an independent set, and since $G^*$ is $2$-connected, each vertex of $D$ has degree 2. Suppose there exists $u,v \in D$ where $N(u) \neq N(v)$. Let $N(u) = \{a,b\}, N(v) = \{c,d\}$ where it is possible that $b = c$. Then we can find a cycle $C$ of $H$ that covers $V(H)$ which contains edges $ab$ and $cd$. Then $C - ab - cd + ua + ub + vc + vd$ is a cycle of length $k$ in $G^*$. Thus for every $v \in D$, $N(v) = \{a,b\}$ for some $a,b \in H$. I.e., $G^* = H_{n,k,2}$, and thus $G \subseteq H_{n,k,2}$. 
\qed

\subsection{ Proof of Theorem~\ref{main}$'$ for all $n$}
We use induction on $n$ with the base case $n \leq k + (t-1)/2$. Suppose $n \geq k + t/2$ and  for all $k \leq n' < n$, Theorem~\ref{main}$'$ holds. 
Let $G$ be a $2$-connected graph $G$ with  $n$ vertices such that 
\begin{equation}\label{m301}
\mbox{$e(G) > \max\{h(n,k,t-1), h(n,k,3)\}$ and $c(G) < k$.}
 \end{equation}
Apply one step of (BP). If (BP4) was applied (so neither (BP2) nor (BP3) applies to $G$), then $G_m = G$ (with $G_m$ defined as in the previous case). 
By Lemmas~\ref{oldmain4},~\ref{co28}, and~\ref{oldmain3}, the theorem holds.

Therefore we may assume that either (BP2) or (BP3) was applied. Let $G^-$ be the resulting graph. 
Then $c(G^-) < k$, and $G^-$ is $2$-connected.  

\begin{claim}\label{claim5.3}
\begin{equation}\label{m26}
e(G^-) > \max\{h(|V(G^-)|, k, t-1), h(|V(G^-)|, k, 3)\}.
\end{equation}
\end{claim}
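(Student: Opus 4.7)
The plan is to split the argument into the two possible sub-cases---either (BP2) or (BP3) was applied to produce $G^-$ from $G$---bound the number of edges destroyed in each, and combine with the strict hypotheses $e(G)\ge h(n,k,t-1)+1$ and $e(G)\ge h(n,k,3)+1$ from~\eqref{m301}. Throughout I will use the linear identity $h(n,k,a)-h(n',k,a)=a(n-n')$ and the explicit difference
\[
h(n,k,3)-h(n,k,t-1)=(t-4)\left[\tfrac{t-5}{2}-(n-k)\right]
\]
for $k=2t+1$ (with $\tfrac{t-3}{2}$ in place of $\tfrac{t-5}{2}$ for $k=2t+2$), which is precisely the computation underlying the table in Subsection~2.2.

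\textbf{Case A} [(BP2) applied]. Contracting an edge $uv$ with $T_G(uv)\le t-2$ and $G/uv$ two-connected destroys exactly $1+T_G(uv)\le t-1$ edges: the edge $uv$ itself, and one edge per common neighbor of $u,v$ (its two parallel edges to $u*v$ collapse to one). Hence $e(G^-)\ge e(G)-(t-1)\ge h(n-1,k,t-1)+1$. For the bound against $h(n-1,k,3)$, the hypothesis $n\ge k+t/2$ gives $n-1\ge k+(t-5)/2$ (odd $k$) and $n-1\ge k+(t-3)/2$ (even $k$), so the displayed identity yields $h(n-1,k,t-1)\ge h(n-1,k,3)$, and the claim follows.

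\textbf{Case B} [(BP3) applied]. Set $m:=n-t+1=|V(G^-)|$, and let $e_H\le 2(t-1)$ count the edges from $V(H_1)$ to $\{x,y\}$. The edge loss is $\binom{t-1}{2}+e_H\le\tfrac{(t-1)(t+2)}{2}$. Using $h(n,k,t-1)-h(m,k,t-1)=(t-1)^2$ together with $e(G)\ge h(n,k,t-1)+1$, I obtain
\[
e(G^-)\ge h(m,k,t-1)+1+\tfrac{(t-1)(t-4)}{2}> h(m,k,t-1).
\]
Since (BP3) requires $n\ge k+t-1$, one has $m\ge k$. The function $h(m,k,t-1)-h(m,k,3)=(t-4)\bigl[(m-k)-\tfrac{t-5}{2}\bigr]$ (odd $k$) is non-decreasing in $m$ (slope $t-4\ge 0$), so its minimum over $m\ge k$ is attained at $m=k$ and equals $-(t-4)(t-5)/2$. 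Plugging this in gives
\[
e(G^-)-h(m,k,3)\ge 1+\tfrac{(t-1)(t-4)}{2}-\tfrac{(t-4)(t-5)}{2}=2t-7,
\]
and the analogous computation for $k=2t+2$ yields a margin of $t-3$; both are $\ge 1$ for every $t\ge 4$.

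The argument is essentially bookkeeping once the cases are separated; the only delicate moment is Case~B with $m$ at its minimum value $k$, where $h(m,k,3)$ can dominate $h(m,k,t-1)$. One might worry that this forces a sharper bound on $e_H$ exploiting $c(G)<k$ (via cycles through $H_1\cup\{x,y\}$ combined with outside $(x,y)$-paths), but the crude estimate $e_H\le 2(t-1)$ already leaves the strictly positive margin $2t-7$ (odd $k$) or $t-3$ (even $k$).
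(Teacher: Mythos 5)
Your proof is correct and follows essentially the same route as the paper: split on whether (BP2) or (BP3) produced $G^-$, bound the edge loss by $t-1$ resp.\ $\binom{t-1}{2}+2(t-1)=\binom{t+1}{2}-1$, and compare against $h(\cdot,k,t-1)$ and $h(\cdot,k,3)$ using the same difference identities. The only (cosmetic) divergence is in Case B's comparison with $h(m,k,3)$, where you use the closed-form difference $h(m,k,t-1)-h(m,k,3)$ and its monotonicity in $m$ to get the clean margins $2t-7$ and $t-3$, whereas the paper expands directly in $n$ and falls back on Remark~5.1 for the smallest values of $n$ --- your packaging makes that appeal unnecessary.
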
 
 
\begin{proof}
If (BP2) was applied, i.e., $G^- = G/uv$ for some edge $uv$, 
then $$e(G^-) \geq e(G) - (t-1) > h(n-1, k, t-1) \geq h(n-1, k, 3),$$ so~\eqref{m26} holds. Therefore we may assume that (BP3) was applied to obtain $G^-$. Then  $n \geq k+t-1$ and  $e(G)-e(G^-)={t+1 \choose 2} - 1$. So by~\eqref{m301},
\begin{equation}\label{m26e}
e(G^-) > h(n,k,t-1) - {t+1 \choose 2} + 1.
\end{equation}
The right hand side of~\eqref{m26e} equals 
$h(n-(t-1), k, t-1)+t^2/2 -5t/2 + 2$ which is at least $h(n-(t-1), k, t-1)$ for $t\geq 4$, proving the first part of~\eqref{m26}. 

We now show that also  $e(G^-) > h(n-(t-1),k,3)$. Indeed,   for $k = 2t+1$, 
$$
e(G^-) - h(n-(t-1), k, 3) >
 {t+2 \choose 2} + (t-1)(n - t-2) - {t + 1 \choose 2} + 1 $$
$$ - \left[{2t -2 \choose 2} + 3(n - (t-1) - (2t-2))\right] 
 \geq 0 \text{ when } n\geq 3t. $$
Similarly, for $k = 2t+2$, 
$$
e(G^-) - h(n-(t-1), k, 3) >
 {t+3 \choose 2} + (t-1)(n - t-3) - {t + 1 \choose 2} + 1 $$
$$ - \left[{2t -1 \choose 2} + 3(n - (t-1) - (2t-1))\right] 
 > 0 \text{ when } n\geq 3t+1.
$$

Thus if $n\geq 3t+1$, then~\eqref{m26} is proved.
But if $n \in \{3t-1,3t\}$ then by Remark 5.1, no graph to which (BP3)  applied may have more than $h(n,k,t-1)$ edges. \end{proof}

By~\eqref{m26}, we may apply induction to $G^-$. So 
 $G^-$ satisfies either (a) $G^- \subseteq H_{|V(G^-)|, n, 2}$, or 
(b) $G^-$ is contained in a graph in $\mathcal G(n,k) - H_{|V(G^-)|,k,2}$ and contains a subgraph $H \in \mathcal F(k)$. 
Suppose first that $G^-$ satisfies (b). If (BP3) was applied to obtain $G^-$ from $G$, then because $G^-$ contains a subgraph 
$H \in \mathcal F(k)$ and $G^- \subseteq G$, $G$ also contains $H$. If (BP2) was applied, then by Lemma~\ref{oldmain2},
 $G$ contains a subgraph $H' \in \mathcal F(k)$. In either case,  Lemma~\ref{oldmain3} implies that $G$ is a subgraph of a graph in
 $\mathcal G(n,k) - H_{n,k,2}$.
 
So we may assume that (a) holds, that is, $G^-$ is a subgraph of $H_{|V(G^-)|, n, 2}$.  Because $\delta(G^-) \leq 2$, $\delta(G) \leq 3$, 
and so $G$ has edges in at most $2 \leq t-2$ triangles. Therefore (BP2) was applied to obtain $G^-$, where $G/uv = G^-$. 
Let $D$ be an independent set of vertices of $G^-$ of size $(n-1) - (k-2)$ with $N(D) = \{a,b\}$ for some $a,b \in V(G^-)$. 
Since $T_{G^-}(xa), T_{G^-}(xb) \leq 1$ for every $x\in D$, we have that $T_G(uv) \leq 2$ with equality only if $T(G) = 2$ where $T(G) = \min_{xy \in E(G)}T_G(xy)$.
 
We want to show that $T_G(uv) \leq 1$. If not, suppose first that $u*v \in D \subseteq V(G^-)$. 
Then there exists $x \in D -u*v$, and $x$ and $u*v$ are not adjacent in $G^-$. Therefore $x$ was not in a triangle with $u$ and $v$ in $G$,
 and hence $T_G(xa) = T_{G^-}(xa) \leq 1$, so the edge $xa$ should have been contracted instead.  Otherwise if $u*v \notin D$, at least one of $\{a,b\}$, say $a$, is not $u*v$. If $T(G) = 2$, 
then for every $x \in D \subseteq V(G)$, $T_G(xa) = 2$, therefore each such edge $xa$ was in a triangle with $uv$ in $G$. Then 
$T_G(uv) \geq |D| = (n-1) - (k-2) \geq k + t/2 - 1 - k + 2 \geq 3$, a contradiction.   

Thus $T_G(uv) \leq 1$ and $e(G)\leq 2+e(G^-)\leq 2 + h(n-1,k,2) = h(n,k,2)$. But for $n\geq k+t/2$, we have $h(n,k,t-1)\geq h(n,k,2)$, a contradiction.
\qed

\medskip
{\bf Acknowledgment.} The authors thank Zolt\' an Kir\' aly for helpful comments.

\end{document}